\numberwithin{equation}{section}
\newcommand{\lt}{\lfloor T\rfloor}
\newcommand{\Prob}{\bm{\mathrm{P}}}
\newcommand{\e}{\varepsilon}
\newtheorem{theorem}{Theorem}[section]
\newtheorem{lemma}[theorem]{Lemma}
\newtheorem{proposition}[theorem]{Proposition}
\newtheorem{definition}[theorem]{Definition}
\newtheorem{example}[theorem]{Example}
\newtheorem{remark}[theorem]{Remark}
\newtheoremstyle{named}{}{}{\itshape}{}{\bfseries}{.}{.5em}{\thmnote{#3}#1}
\theoremstyle{named}
\newtheorem*{namedtheorem}{}
\newcommand{\E}{\bm{\mathrm E}}
\title{Local limit theorem for time-inhomogeneous functions of Markov processes}
\author{Leonid Koralov\footnote{Department of Mathematics, University of Maryland, Email: koralov@umd.edu},\quad Shuo Yan\footnote{Department of Mathematics, Imperial College London, Email: s.yan@imperial.ac.uk}}
\date{}
\begin{document}

\maketitle
\begin{abstract}
    In this paper, we consider a continuous-time Markov process and prove a local limit theorem for the integral of a time-inhomogeneous function of the process. One application is in the study of the fast-oscillating perturbations of linear dynamical systems.
    
    \textbf{Keywords:} Local limit theorem, ergodic Markov process, quasi-compactness.

    \textbf{Mathematics Subject Classification:} 60F05, 60J25
\end{abstract}
\section{Introduction}
\label{sec:intro}
Suppose that $\{X_s^x\}_{s\geq0}$ is a time-homogeneous Markov family on a metric space $(\bm X,\mathscr B(\bm X))$ with $P(s,x,\Gamma)$ being its transition function. 
We will drop the superscript $x$ from the notation when it is clear what the initial point or distribution of the process is.
Let $\mathcal B$ be the Banach space of bounded $\mathscr B(\bm X)$-measurable complex functions on $\bm X$ equipped with the supremum norm: $\|f\|=\sup_{x\in\bm X}|f(x)|$.
Then we have the corresponding semigroup $\{P_s\}_{s\geq0}$ defined as
\begin{equation*}
    P_sf(x)=\E_xf(X_s)=\int_{\bm X} f(y)P(s,x,dy).
\end{equation*}
In order to state the local limit theorem, we make additional assumptions about $\{X_s\}_{s\geq0}$. 
Among all, the most essential condition is the uniqueness of the invariant measure and the uniform convergence of $X_s$ to this measure, which can be re-formulated in terms of the quasi-compactness (\cite{HennionHerve}) of the operators $P_s$, $s>0$. To be more precise, an operator $P$ acting on $\mathcal B$ is said to satisfy the property of quasi-compactness if $\mathcal B$ can be decomposed into two $P$-invariant closed subspaces: 
\begin{equation}
\label{eq:quasi-compact}
    \mathcal B=F\oplus H,
\end{equation}
where 
$r({P|}_{H})<r(P)$, while dim$(F)<\infty$, and each eigenvalue of ${P}_{|F}$ has modulus $r(P)$. 
Here, $r(\cdot)$ stands for the spectral radius of an operator. 
Suppose that $\nu$ is the unique invariant measure of $\{X_s\}_{s\geq0}$, and the distribution of $X_s$ converges to $\nu$ in total variation, uniformly in initial distribution. 
Then it is clear that, for every $s>0$, $P_s$ is quasi-compact with $F=\mathrm{span}\{1\}$ and $H=\{f\in\mathcal B:\langle\nu,f\rangle=0\}$, where $\nu$ is understood as a functional on $\mathcal B$ via $\langle\nu,f\rangle=\int_{\bm X}fd\nu$. 
A more direct way to understand quasi-compactness is to give a decomposition of the operator itself. 
Define $\mathcal L_{\mathcal B}$ to be the space of all bounded linear operators from $\mathcal B$ to itself, and $\mathcal B'$ to be the space of all bounded linear operators from $\mathcal B$ to $\mathbb C$.
For $\varphi\in\mathcal B'$ and $v\in\mathcal B$, we define
$v\otimes\varphi\in\mathcal L_{\mathcal B}$ by $(v\otimes\varphi)f=\langle\varphi,f\rangle v$ for all $f\in\mathcal B$. 
Let us denote $Q=P_1$ be the operator corresponding to the Markov process $\{X_s\}_{s\geq0}$ sampled at integer points.
As we discussed, $Q$ is quasi-compact with $F=\mathrm{span}\{1\}$ and $H=\{f\in\mathcal B:\langle\nu, f\rangle=0\}$. So, we have the following decomposition for $Q$:
    \begin{equation}
    \label{eq:multi-decomposition}
        Q=1\otimes\nu +N,
    \end{equation}
    where $N=Q\Pi_H$, $\Pi_H$ is the projection onto $H$, and $r(N)<1$.

In applications, one often considers limit theorems for additive functionals of Markov processes (\cite{HennionHerve},\cite{Hitsuda},\cite{spectralgap}). 
For example, let $b(x)$ be a real-valued bounded continuous function on $\bm X$.
One is interested in describing the distribution of $\int_0^T b(X_s)ds$ as $T\to\infty$. 
So, in addition to the process $\{X_s\}_{s\geq0}$, assumptions also need to be made about the function $b(x)$. 
Similar questions have also been studied for discrete-time Markov chains, in which case, one is interested in the distribution of the sum $\sum_{k=1}^n b(X_k)$ as $n\to\infty$.

In this paper, we focus on the integral formulation of the problem as described but allow the function $b$ to vary in time and be vector-valued.  
Namely, let $b:[0,1]\times\bm X\to\mathbb R^d$, $d\in\mathbb N$ and $S_T=\int_0^T b(s/T,X_s)ds$. 
We will prove that a local limit theorem holds for $S_T$. 
The spectral method has been applied to prove various limit theorems for Markov chains (cf. \cite{HennionHerve}, \cite{BSMF_2010__138_3_415_0}, \cite{MR1875670}, \cite{spectralgap}, \cite{MR3793181}, and the references therein).
For example, in the case where the function $b$ does not depend on time, the local limit theorem can be obtained by studying the characteristic function of the sum (or the integral in the continuous case) or, equivalently, the limiting behavior of the power of the corresponding Fourier kernels.
Due to the quasi-compactness of the corresponding operator, the limiting behavior of its power is primarily determined by the dominant eigenvalues and the corresponding eigenfunctions.
However, in this paper, we need to deal with the additional dependence on another parameter that appears in the discretization of the time interval $[0,T]$ due to the time-inhomogeneity of the function $b$, and study the product of a sequence of different Fourier operators that depend on the parameter.
This main technical step is given in Proposition~\ref{prop:multi-operator_product}.

The paper is organized as follows: In the rest of Section~\ref{sec:intro}, we introduce some notation, formulate the assumptions and the main result, and discuss some applications. In Section~\ref{sec:prelim}, we give preliminary results to motivate the ingredients of the proof. In Section~\ref{sec:product}, we prove the technical results. In Section~\ref{sec:proof}, we give the proof of our main result, Theorem~\ref{thm:main_result}.

Throughout this article, $\Prob$ and $\E$ represent the probability and expectation, respectively, and the  subscripts pertain to initial conditions. 
The Lebesgue measure is denoted by $\mathfrak L$.
For each $k\geq 0$, a subset $X$ of a Euclidean space, and a Banach space $Y$, $\mathcal C^k(X,Y)$ denotes the space of $k$ times continuously differentiable functions from $X$ to $Y$. 
For $C>0$, let $\mathcal B_{C}=\{f\in\mathcal B:\|f\|\leq C\}$.
Let $\mathcal B'_p$ be the space of probability measures on $\bm X$.
Let $\lfloor T\rfloor$ denote the integer part of $T$.

\begin{definition}
    The operator $Q(t,\alpha,\beta)\in\mathcal L_{\mathcal B}$, $t\in\mathbb R^d$, $\alpha,\beta\in[0,1]$, is defined as
    \begin{equation}
    \label{def:Q(t,alpha,beta)}
        Q(t,\alpha,\beta)f(x)=\E_x\left[\exp\left(it\cdot\int_0^1b\left(\alpha+s(\beta-\alpha),X_s\right)ds\right)f(X_1)\right].
    \end{equation}
    In particular, for each $\alpha$ and $\beta$, and $t=0$, this operator coincides with $Q$.
\end{definition}

\textbf{\textit{Assumptions:}}
\begin{enumerate}
    \item[\hypertarget{as:markov_process}{(A1)}] $\{X_s\}_{s\geq0}$ is a Markov process on the complete separable state space $(\bm X,\mathscr B(\bm X))$ with sample paths in the Skorokhod space $D([0,\infty),\bm X)$.
    \item[\hypertarget{as:mixing}{(A2)}] $\nu$ is the unique invariant measure of $\{X_s\}_{s\geq0}$ on $\bm X$. There exists $\mathcal T>0$ such that \begin{equation*}
        \sup_{x\in\bm X}\mathrm{TV}(P(\mathcal T,x,\cdot),\nu)<1,
    \end{equation*}
    where $TV$ is the total variation distance between two probability measures.
    \item[\hypertarget{as:zero_mean}{(A3)}] 
    $b:[0,1]\times\bm X\to\mathbb R^d$ is continuous and bounded.
    For each $0\leq\alpha\leq 1$, $\int_{\bm X}b(\alpha,\cdot)d\nu=0$.
    For each $x\in\bm X$, $b(\alpha,x)$ is twice differentiable in $\alpha$.
    Moreover, $\frac{\partial}{\partial\alpha}b(\alpha,x)$ and $ \frac{\partial^2}{\partial\alpha^2}b(\alpha,x)$ are continuous and bounded on $[0,1]\times\bm X$. 
    \item[\hypertarget{as:non-arithmetic}{(A4)}] Non-arithmetic condition on $(\{X_s\}_{s\geq0},b(\alpha,x))$ holds: For every $0\leq\alpha\leq1$ and every $t\not=0$, $r(Q(t,\alpha,\alpha))<1$.
\end{enumerate}
\begin{remark}
It is not hard to verify that,
under Assumption~\hyperlink{as:mixing}{(A2)}, $Q$ is quasi-compact as defined in \eqref{eq:quasi-compact} and has the decomposition \eqref{eq:multi-decomposition}.
In addition, under Assumption~\hyperlink{as:zero_mean}{(A3)}, $Q(t,\alpha,\beta)\in\mathcal C^2(\mathbb R^d\times[0,1]^2,\mathcal L_{\mathcal B})$.
\end{remark}
\begin{remark}
    The non-arithmetic condition \hyperlink{as:non-arithmetic}{(A4)} is required to prove the local limit theorem. 
    However, the formulation is not intuitive and often not easy to verify.
    In fact, with Assumptions \hyperlink{as:markov_process}{(A1)-(A3)}, it can be reduced to a more natural form - the non-lattice condition:
    \begin{enumerate}
        \item[\hypertarget{as:non-lattice}{\textup{(NL)}}] For each $\alpha\in[0,1]$, there is no $a\in\mathbb R^d$, a closed subgroup $H\subsetneqq \mathbb R^d$, and bounded measurable function $u:\bm X\to\mathbb R^d$ such that $\int_0^1 b(\alpha,X_s)ds+u(X_1)-u(X_0)\in a+H$, $\nu$-a.s.
    \end{enumerate}
    In the Appendix, we will provide a proof of this statement.
\end{remark}
\begin{theorem}
\label{thm:main_result}
    Let Assumptions \hyperlink{as:markov_process}{(A1)-(A4)} be satisfied and 
    \begin{equation}
    \label{eq:def_of_S}
        S_T=\int_0^T b(s/T,X_s)ds.
    \end{equation}
    Then there exists a positive-definite matrix $\Sigma$ such that, for each compactly supported function $g\in \mathcal C(\mathbb R^d,\mathbb R)$, and $C>0$, uniformly in real-valued $f\in\mathcal B_C$, initial distribution $\mu\in\mathcal B'_p$, and $u\in\mathbb R^d$,
    \begin{equation*}
        \lim_{T\to\infty}|\mathrm{det}(\Sigma)(2\pi T)^{d/2}\E_\mu[f(X_T)g(S_T-u)]-e^{-\frac{1}{2T}u^*(\Sigma\Sigma^*)^{-1}u}\langle\nu,f\rangle\langle \mathfrak L,g\rangle|=0.
    \end{equation*}
\end{theorem}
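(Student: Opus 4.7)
The plan is to apply Fourier inversion in the $S_T$ variable and reduce the asymptotics to those of the Fourier kernel $\E_\mu[f(X_T)e^{it\cdot S_T}]$, which by the Markov property at integer times factors into a product of operators $Q(t,k/T,(k+1)/T)$. Assume first that $\widehat g$ is continuous and compactly supported; the general case is handled at the end via a Stone-type sandwich. Fourier inversion gives
\begin{equation*}
\E_\mu[f(X_T)g(S_T-u)]=\frac{1}{(2\pi)^d}\int_{\mathbb R^d}\widehat g(t)\,e^{-it\cdot u}\,\E_\mu[f(X_T)e^{it\cdot S_T}]\,dt,
\end{equation*}
and writing $n=\lfloor T\rfloor$ and iterating the Markov property at integer times yields
\begin{equation*}
\E_\mu[f(X_T)e^{it\cdot S_T}]=\big\langle\mu,\,Q(t,0,1/T)\,Q(t,1/T,2/T)\cdots Q(t,(n-1)/T,n/T)\,R_T(t)\,f\big\rangle,
\end{equation*}
where $R_T(t)\in\mathcal L_{\mathcal B}$ absorbs the integration over the residual interval $[n,T]$ and is uniformly bounded in $t$.

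I would then split the $t$-integral at a small $\delta>0$. On the compact set $\{|t|\ge\delta\}\cap\operatorname{supp}\widehat g$, the non-arithmetic condition \hyperlink{as:non-arithmetic}{(A4)} together with continuity in $(t,\alpha)$ gives a uniform bound $r(Q(t,\alpha,\alpha))\le\rho<1$; Proposition~\ref{prop:multi-operator_product} (applied away from the origin) should translate this into an estimate $\|\prod_k Q(t,k/T,(k+1)/T)\|=O(\rho^n)$ uniformly, so that this region contributes $O(e^{-cT})$ even after the $(2\pi T)^{d/2}$ rescaling. On the remaining region $\{|t|<\delta\}$, standard perturbation theory around $t=0$ endows each $Q(t,\alpha,\alpha)$ with a simple leading eigenvalue $\lambda(t,\alpha)$ satisfying $\lambda(0,\alpha)=1$, $\partial_t\lambda(0,\alpha)=0$ (since $\int b(\alpha,\cdot)\,d\nu=0$), and $-\partial_t^2\lambda(0,\alpha)$ defining a positive semi-definite matrix $\Sigma(\alpha)$. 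The role of Proposition~\ref{prop:multi-operator_product} is precisely to carry this perturbative analysis through a product of distinct (but nearby) operators, yielding an expansion essentially of the form
\begin{equation*}
\prod_{k=0}^{n-1}Q\!\left(t,\tfrac{k}{T},\tfrac{k+1}{T}\right)=e^{-\frac{T}{2}\,t^{*}\Sigma\Sigma^{*} t}\,(1\otimes\nu)+\text{lower order},\qquad \Sigma\Sigma^{*}=\int_0^1\Sigma(\alpha)\,d\alpha,
\end{equation*}
with the error uniformly negligible after rescaling.

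Substituting this into the Fourier inversion formula, changing variable $t=s/\sqrt T$, and invoking dominated convergence reduces the small-$t$ contribution, after multiplication by $(2\pi T)^{d/2}$, to
\begin{equation*}
\langle\nu,f\rangle\,\widehat g(0)\cdot\frac{1}{(2\pi)^{d/2}}\int_{\mathbb R^d}e^{-is\cdot u/\sqrt T}\,e^{-\frac12 s^{*}\Sigma\Sigma^{*} s}\,ds=\frac{1}{\det(\Sigma)}\,e^{-\frac{1}{2T}u^{*}(\Sigma\Sigma^{*})^{-1}u}\,\langle\nu,f\rangle\,\langle\mathfrak L,g\rangle,
\end{equation*}
using $\widehat g(0)=\langle\mathfrak L,g\rangle$ and the explicit Gaussian integral, uniformly in $u,f,\mu$.

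Finally, to remove the hypothesis that $\widehat g$ is compactly supported, I would split $f=f^{+}-f^{-}$ and, for each sign, sandwich $g$ between continuous $g_\varepsilon^{\pm}$ with compactly supported Fourier transforms satisfying $\int(g_\varepsilon^{+}-g_\varepsilon^{-})\,d\mathfrak L\to0$ as $\varepsilon\to0$; the result just established for $g_\varepsilon^{\pm}$, combined with monotonicity, yields the same conclusion for $g$. The main obstacle is neither Fourier inversion nor the Gaussian computation but the spectral analysis of the product $\prod_kQ(t,k/T,(k+1)/T)$ of \emph{different} operators indexed by the slowly varying parameter $\alpha=k/T$: ordinary eigenvalue perturbation theory for a single quasi-compact operator must be extended along the chain, and the integrated variance $\int_0^1\Sigma(\alpha)\,d\alpha$ emerges only after careful tracking of the per-step perturbative contributions. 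This is exactly what Proposition~\ref{prop:multi-operator_product} is engineered to provide.
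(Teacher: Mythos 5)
Your proposal follows essentially the same route as the paper's proof: Fourier inversion combined with the Nagaev-type operator product (Lemma~\ref{lem:Nagaev}), the perturbative analysis of the product of nearby operators near $t=0$ (Proposition~\ref{prop:multi-operator_product} together with Lemmas~\ref{lem:asymptotics} and~\ref{lem:product_of_eigenvalues}), exponential decay of the product away from the origin, the Gaussian computation after the change of variable $t=\tau/\sqrt T$, and a final sandwich/positivity argument after writing $f=f_+-f_-$ to pass from test functions with compactly supported Fourier transform to general compactly supported continuous $g$ (the paper's Lemma~\ref{lem:main_proof} and Lemma~\ref{lem:multi-compact}). Two small points of bookkeeping: the away-from-zero estimate is the content of Lemma~\ref{lem:exponential_convergence_away_from_0} (via Lemma~\ref{lem:multi-mstepnorm}), not of Proposition~\ref{prop:multi-operator_product}, and you need \hyperlink{as:non-arithmetic}{(A4)} to upgrade $-\nabla_t^2\lambda(0,\alpha,\alpha)$ from positive semi-definite to positive definite (as in Lemma~\ref{lem:asymptotics}), which is what makes $\det\Sigma>0$ and $(\Sigma\Sigma^*)^{-1}$ meaningful in the limit formula.
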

\begin{example}
    The assumptions are satisfied if $X_s$ is a non-degenerate diffusion process on $\mathbb T^m$, \hyperlink{as:zero_mean}{(A3)} is satisfied, and $\{b(\alpha,x)$, $x\in\bm X\}$ spans $\mathbb R^d$ for each $0\leq \alpha\leq1$.
\end{example}

Before starting the proof of Theorem~\ref{thm:main_result}, let us discuss an application to the averaging principle for randomly perturbed linear dynamical systems.
\begin{theorem}
\label{thm:linear_dynamics}
    Let $Y_t^\e$ be defined by the equation:
    \begin{equation}
    \label{eq:random_linear_dynamics}
        dY_t^\e=[A(t)Y_t^\e+v(t,X_{t/\e})]dt,~~Y_0^\e=y\in\mathbb R^d,
    \end{equation}where $A(t)$ is a continuously differentiable $d$-dimensional square-matrix-valued function, $v:[0,\infty)\times\bm X\to\mathbb R^d$ is continuous, and $\{X_s\}_{s\geq0}$ satisfies  Assumptions \hyperlink{as:markov_process}{(A1)} and \hyperlink{as:mixing}{(A2)}.
    Let $\int_{\bm X}v(t,x)d\nu (x)=\bar v(t)$ and $y_t$ be the averaged motion defined by:
    \begin{equation}
    \label{eq:linear_dynamics}
        dy_t=[A(t)y_t+\bar v(t)]dt,~~y_0=y.
    \end{equation} Fix $t>0$ and suppose that $v(\alpha t,x)-\bar v(\alpha t)$ as a function of $\alpha$ and $x$ satisfies \hyperlink{as:zero_mean}{(A3)} and the non-arithmetic condition \hyperlink{as:non-arithmetic}{(A4)} holds for $(\{X_s\}_{s\geq0},v(\alpha t,x)-\bar v(\alpha t))$.
    Then the local limit theorem holds for $\frac{1}{\e}(Y_t^\e-y_t)$ as $\e\to\infty$.
    Namely, there exists a positive-definite matrix $\Sigma_{t}$ such that, for each compactly supported continuous real-valued function $g$, we have
    \begin{equation*}
        \lim_{\e\to0}\left|\mathrm{det}(\Sigma_{t})(2\pi t /\e)^{d/2}\E_\mu\left[f(X_{t/\e})g\left(\frac{1}{\e}(Y_t^\e-y_t)-u\right)\right]-e^{-\frac{1}{2T}u^*(\Sigma_{t}\Sigma_{t}^*)^{-1}u}\langle\nu,f\rangle\langle \mathfrak L,g\rangle\right|=0,
    \end{equation*}
    uniformly in real-valued measurable $f$ satisfying $\|f\|\leq C$, $u\in\mathbb R^d$, and probability measure $\mu$. 
    
\end{theorem}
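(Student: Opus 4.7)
The plan is to reduce Theorem~\ref{thm:linear_dynamics} directly to Theorem~\ref{thm:main_result} via the variation of constants formula followed by a time rescaling; after that only the verification of assumptions remains. Let $\Phi(t,s)$ denote the fundamental matrix of $\dot z=A(t)z$, so that $\partial_t\Phi(t,s)=A(t)\Phi(t,s)$ and $\Phi(s,s)=I$. Subtracting \eqref{eq:linear_dynamics} from \eqref{eq:random_linear_dynamics} gives a linear inhomogeneous equation for $Y^\e-y$ with zero initial value and forcing $v(\cdot,X_{\cdot/\e})-\bar v(\cdot)$, whose solution is
\begin{equation*}
    Y_t^\e-y_t=\int_0^t\Phi(t,s)[v(s,X_{s/\e})-\bar v(s)]\,ds.
\end{equation*}
Changing variables $\sigma=s/\e$ and setting $T:=t/\e$, so $\e\sigma=\sigma t/T$, this becomes
\begin{equation*}
    \frac{1}{\e}(Y_t^\e-y_t)=\int_0^T\Phi(t,\sigma t/T)[v(\sigma t/T,X_\sigma)-\bar v(\sigma t/T)]\,d\sigma=\int_0^T\tilde b(\sigma/T,X_\sigma)\,d\sigma,
\end{equation*}
where $\tilde b(\alpha,x):=\Phi(t,\alpha t)[v(\alpha t,x)-\bar v(\alpha t)]$ for $\alpha\in[0,1]$. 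The right-hand side is exactly $S_T$ in the notation of Theorem~\ref{thm:main_result} applied to the integrand $\tilde b$, and the terminal state $X_T=X_{t/\e}$ matches the one appearing in Theorem~\ref{thm:linear_dynamics}.

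Next I would verify the hypotheses of Theorem~\ref{thm:main_result} for $\tilde b$. Assumptions \hyperlink{as:markov_process}{(A1)} and \hyperlink{as:mixing}{(A2)} are postulated on $\{X_s\}$ directly. For \hyperlink{as:zero_mean}{(A3)}, since $A$ is continuously differentiable, the map $\alpha\mapsto\Phi(t,\alpha t)$ is twice continuously differentiable on $[0,1]$ (indeed $\partial_s\Phi(t,s)=-\Phi(t,s)A(s)$ and one further differentiation reuses the $\mathcal C^1$ regularity of $A$), bounded, and $x$-independent; combined with the hypothesis \hyperlink{as:zero_mean}{(A3)} on $v(\alpha t,x)-\bar v(\alpha t)$, this yields continuity, boundedness, zero $\nu$-mean, and the required $\mathcal C^2$-dependence on $\alpha$ for $\tilde b$. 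For \hyperlink{as:non-arithmetic}{(A4)}, observe the identity
\begin{equation*}
    \tau\cdot\int_0^1\tilde b(\alpha,X_s)\,ds=\bigl(\Phi(t,\alpha t)^{\top}\tau\bigr)\cdot\int_0^1[v(\alpha t,X_s)-\bar v(\alpha t)]\,ds,
\end{equation*}
so that the Fourier operator attached to $\tilde b$ at parameter $\tau$ coincides with the one attached to $v(\alpha t,\cdot)-\bar v(\alpha t)$ at parameter $\Phi(t,\alpha t)^{\top}\tau$. Invertibility of the fundamental matrix gives $\tau\neq0\iff\Phi(t,\alpha t)^{\top}\tau\neq0$, so the non-arithmetic condition transfers from $v-\bar v$ to $\tilde b$.

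Finally, I would apply Theorem~\ref{thm:main_result} to $\tilde b$ with $T=t/\e$ and take $\Sigma_t$ to be the positive-definite matrix it produces. Substituting $T=t/\e$ into the limiting statement and invoking the built-in uniformity in $f$, $\mu$, and $u$ yields precisely the conclusion of Theorem~\ref{thm:linear_dynamics}. The only non-routine step is the transfer of \hyperlink{as:non-arithmetic}{(A4)} from $v-\bar v$ to $\tilde b$, and that is immediate once the identity above is written down, because $\Phi(t,\alpha t)$ is invertible for every $\alpha\in[0,1]$. Everything else is a combination of the variation of constants representation, a linear change of variables, and elementary regularity bookkeeping for the multiplier $\Phi(t,\alpha t)$.
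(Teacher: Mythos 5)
Your proof is correct and follows essentially the same route as the paper's: variation of constants (Duhamel) to represent $\frac{1}{\e}(Y_t^\e-y_t)$ as an additive functional, the substitution $T=t/\e$, identification of $\tilde b(\alpha,x)=\Phi(t,\alpha t)[v(\alpha t,x)-\bar v(\alpha t)]$ (the paper writes $\Phi(t,\alpha t)$ as $U(t)U(\alpha t)^{-1}$), and transfer of \hyperlink{as:non-arithmetic}{(A4)} via the transpose of the invertible multiplier. Your verification of the $\mathcal C^2$ regularity for \hyperlink{as:zero_mean}{(A3)} is slightly more explicit than the paper's one-line remark, but the argument is the same.
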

    \begin{proof}
    By taking the difference of \eqref{eq:random_linear_dynamics} and \eqref{eq:linear_dynamics} we obtain
    \begin{align}
    \label{eq:inhomogeneous_equation}
        \frac{1}{\e}(Y_t^\e-y_t)&=\frac{1}{\e}\int_0^t\left(A(s)(Y_s^\e-y_s)+v(s,X_{s/\e})-\bar v(s)\right)ds.
    \end{align}
    Let $U(t)$ solve the equation:
    \begin{equation*}
        \begin{cases}
            dU(t)=A(t)U(t)dt\\
            U(0)=I
        \end{cases}.
    \end{equation*}
    Then, by Duhamel's principle,
    \begin{equation}
    \label{eq:duhamel}
        \frac{1}{\e}(Y_t^\e-y_t)=\frac{1}{\e}\int_0^t U(t)U(s)^{-1}\left(v(s,X_{s/\e})-\bar v(s)\right)ds.
    \end{equation}(One can also verify directly that \eqref{eq:duhamel} holds by differentiating both sides and using \eqref{eq:random_linear_dynamics} and \eqref{eq:linear_dynamics}.)
    After the change of variable, with $T=t/\e$, we obtain that 
    \begin{equation*}
    \begin{aligned}
        \frac{1}{\e}(Y_t^\e-y_t)&=\int_0^T U(t)U({st}/{T})^{-1}\left(v(st/T,X_{s})-\bar v(st/T)\right)ds.
    \end{aligned}
    \end{equation*}
    The integral has the form of \eqref{eq:def_of_S} with $b(\alpha,x)=U(t)U(\alpha t)^{-1}\left(v(\alpha t,x)-\bar v(\alpha t)\right)$. 
    It is not hard to see that Assumption \hyperlink{as:non-arithmetic}{(A4)} holds for $(\{X_s\}_{s\geq0},b(\alpha,x))$ since $U(\cdot)$ is non-degenerate.
    Indeed, the operator $Q(t_1,\alpha,\beta)$ in \eqref{def:Q(t,alpha,beta)} corresponding to $(\{X_s\}_{s\geq0},b(\alpha,x))$ with $t_1\not=0$ is the same as the operator $Q(t_2,\alpha,\beta)$ corresponding to $(\{X_s\}_{s\geq0},v(\alpha t,x)-\bar v(\alpha t))$, where $t_2=U^{-1}(\alpha t)^*U(t)^*t_1\not=0$ and $*$ stands for transposition without taking the complex conjugate. This operator has its spectral radius less than $1$ by the assumption.
    Here, to avoid confusion, let us stress that $t_1$ and $t_2$ are parameters in \eqref{def:Q(t,alpha,beta)}, while $t$ is our fixed time.
    Finally, Assumption \hyperlink{as:zero-mean}{(A3)} follows from the assumptions we have on functions $A$ and $v$.
    Thus, the result follows from Theorem~\ref{thm:main_result}.
\end{proof}
\begin{remark}
    The functional central limit theorem is well-known for the fast-slow systems of the form:
    \begin{equation}
    \label{eq:fast-slow}
        dZ_t^\e=c(Z_t^\e,X_{t/\e})dt.
    \end{equation}
    Namely, under natural assumptions, $Z_t^\e$ converges to a deterministic process $z_t$ in probability as $\e\downarrow0$ on each finite interval, and $\frac{1}{\sqrt{\e}}(Z_t^\e-z_t)$ converges to a Gaussian Markov process weakly (\cite{MR0203789}, \cite{MR0517995}).
    The functional central limit theorem gives, in particular, the distribution of $Z_t^\e$ in a neighborhood of $z_t$ of size $O(\sqrt{\e})$.
    Theorem~\ref{thm:linear_dynamics} refines this statement to spatial scales of order $\e$ in the special case of linear systems.
\end{remark}
In addition, it is not hard to verify that the arguments in this paper can be applied to prove the following slightly stronger version of the local limit theorem:
\begin{theorem}
\label{thm:main_result_proved}
    Let assumptions \hyperlink{as:markov_process}{(A1)-(A4)} be satisfied.  For $0\leq\rho<1$, define $S(\rho,T)=\int_{0}^{(1-\rho)T} b(\rho+s/T,X_{s})ds$. Then there exists a positive-definite matrix $\Sigma_\rho$ that is continuous in $\rho$ such that, for each compactly supported function $g\in \mathcal C(\mathbb R^d,\mathbb R)$, $C>0$, and $0\leq\rho_0<1$, uniformly in real-valued $f\in\mathcal B_C$, initial distribution $\mu\in\mathcal B'_p$, $0\leq\rho\leq\rho_0$, and $u\in\mathbb R^d$,
    \begin{equation*}
        \lim_{T\to\infty}\left|\mathrm{det}(\Sigma_\rho)(2\pi T)^{d/2}\E_\mu[f(X_{T(1-\rho)})g(S(\rho,T)-u)]-e^{-\frac{1}{2T}u^*(\Sigma_\rho\Sigma_\rho^*)^{-1}u}\langle\nu,f\rangle \langle \mathfrak L,g\rangle\right|=0.
    \end{equation*}
\end{theorem}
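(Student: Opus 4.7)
The proof would parallel the Fourier-analytic scheme used for Theorem~\ref{thm:main_result}; the only essential differences are that the ``time'' parameter $\alpha$ of $b$ ranges over $[\rho,1]$ rather than $[0,1]$ and that the Markov process is observed on $[0,(1-\rho)T]$. First, using the Fourier transform $\hat g$ of $g$ (smooth with rapid decay after a standard smoothing reduction if necessary), I would write
\begin{equation*}
\E_\mu[f(X_{T(1-\rho)}) g(S(\rho,T) - u)] = (2\pi)^{-d}\int_{\mathbb R^d} e^{-it\cdot u}\,\hat g(t)\,\psi_T(t)\,dt,
\end{equation*}
where $\psi_T(t):=\E_\mu[f(X_{T(1-\rho)})\exp(it\cdot S(\rho,T))]$. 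Discretizing the integral defining $S(\rho,T)$ into unit-length subintervals and applying the Markov property together with the definition \eqref{def:Q(t,alpha,beta)} gives, with $N=\lfloor (1-\rho)T\rfloor$, $\alpha_k=\rho+k/T$, and $\beta_k=\rho+(k+1)/T$,
\begin{equation*}
\psi_T(t) = \langle \mu,\, Q(t,\alpha_0,\beta_0)\,Q(t,\alpha_1,\beta_1)\cdots Q(t,\alpha_{N-1},\beta_{N-1})\,R_T(t)\,f\rangle,
\end{equation*}
where $R_T(t)$ is a uniformly bounded operator absorbing the fractional subinterval at the right endpoint.

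Next I would rescale $t=\tau/\sqrt T$ and split the $\tau$-integral into a central region $|\tau|\leq\delta\sqrt T$, an intermediate region $\delta\sqrt T<|\tau|\leq R\sqrt T$, and a tail region $|\tau|>R\sqrt T$. On the central region, Proposition~\ref{prop:multi-operator_product} reduces the operator product to its dominant eigenvalue contribution up to a negligible error; the perturbative expansion
\begin{equation*}
\lambda(t,\alpha,\beta) = 1 + it\cdot m(\alpha,\beta) - \tfrac12 t^* \Sigma(\alpha,\beta) t + O(|t|^3),
\end{equation*}
combined with $m(\alpha,\alpha)=0$ (by the zero-mean hypothesis in \hyperlink{as:zero_mean}{(A3)}), gives $m(\alpha_k,\beta_k)=O(1/T)$, so the imaginary-linear terms sum to a vanishing error. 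The product of dominant eigenvalues converts to the exponential of a Riemann sum which, as $T\to\infty$, tends to $\exp(-\tfrac12 \tau^* (\Sigma_\rho\Sigma_\rho^*)\tau)$ with
\begin{equation*}
\Sigma_\rho\Sigma_\rho^* := \int_\rho^1 \Sigma(\alpha,\alpha)\,d\alpha,
\end{equation*}
which is continuous in $\rho$ and positive-definite (the latter inherited from the $\rho=0$ non-degeneracy applied to any subinterval). The projection components of the $Q$-factors peel off the scalar $\langle\nu,f\rangle$ on the right, and standard Gaussian Fourier inversion produces the main term $\det(\Sigma_\rho)^{-1}(2\pi T)^{-d/2}e^{-\frac{1}{2T}u^*(\Sigma_\rho\Sigma_\rho^*)^{-1}u}\langle\nu,f\rangle\langle\mathfrak L, g\rangle$.

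The tail ranges $|t|\geq\delta$ are controlled using the non-arithmetic condition~\hyperlink{as:non-arithmetic}{(A4)}: continuity of $(t,\alpha)\mapsto Q(t,\alpha,\alpha)$ in operator norm combined with compactness of $\{(t,\alpha):\delta\leq|t|\leq R,\,0\leq\alpha\leq 1\}$ yields a uniform bound $\sup r(Q(t,\alpha,\alpha))<1$; a perturbative comparison of $Q(t,\alpha_k,\beta_k)$ with $Q(t,\alpha_k,\alpha_k)$ (using $\beta_k-\alpha_k=1/T$) then gives exponential decay of $\psi_T$ in $N$, which absorbs the $\hat g$ factor. On $|t|>R$, the rapid decay of $\hat g$ dominates the trivial bound $|\psi_T(t)|\leq\|f\|$. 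The main obstacle will be ensuring all estimates hold uniformly in $(\rho,\mu,f,u)\in[0,\rho_0]\times\mathcal B'_p\times\mathcal B_C\times\mathbb R^d$ simultaneously; since the admissible pairs $(\alpha_k,\beta_k)$ lie in the compact set $[0,1]^2$, this reduces to verifying that the constants in Proposition~\ref{prop:multi-operator_product} and in the spectral-gap estimate for $Q(t,\alpha,\alpha)$ vary continuously with the parameters, a routine continuity/compactness extension of the $\rho=0$ argument.
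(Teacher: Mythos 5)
Your proposal is correct and follows essentially the same Fourier/operator-product scheme as the paper: the paper itself only gives a one-sentence remark that Theorem~\ref{thm:main_result_proved} is proved exactly as Theorem~\ref{thm:main_result} with the $\rho$-shifted parameters $\alpha_k=\rho+k/T$, $\beta_k=\rho+(k+1)/T$, tracking the $\rho$-dependence uniformly over $[0,\rho_0]$, and your sketch fills in precisely that. Two small remarks: (a) the linear term $\nabla_t\lambda(0,\alpha,\beta)$ is in fact identically zero for \emph{all} $(\alpha,\beta)$ (Lemma~\ref{lem:asymptotics}(i)), not merely $O(1/T)$, since $\nu$ is invariant and $b(\alpha',\cdot)$ has zero $\nu$-mean for every $\alpha'$; and (b) the paper avoids the tail region $|t|>R$ entirely by first proving the statement for $h$ with compactly supported Fourier transform (Lemma~\ref{lem:main_proof}) and then extending to compactly supported continuous $g$ via Lemma~\ref{lem:multi-compact}, which is a cleaner route than bounding a Schwartz $\hat g$ against the growing factor $T^{d/2}$ on an unbounded $t$-region.
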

\begin{remark}
    The only extra work needed in the proof of Theorem~\ref{thm:main_result_proved} is to keep track of the dependence of all terms on $\rho$ and make sure that all the inequalities hold uniformly for all $\rho\in[0,\rho_0]$ where $0\leq\rho_0<1$.
    The formulation of the result in Theorem~\ref{thm:main_result_proved} might seem strange at first. However, it can be useful, for example, when considering the distribution of the process $Y_t^\e$ in Theorem~\ref{thm:linear_dynamics} starting from different points on the trajectory of $y_t$. Such result is used in \cite{Hamiltonian} in order to establish the regularity of the density of fast-slow systems of the form \eqref{eq:fast-slow} starting from different points on $z_t$.
    Some of the steps in the proof of regularity in \cite{Hamiltonian} involve a linearization of \eqref{eq:fast-slow}, which yields a process of the form \eqref{eq:linear_dynamics}, whose distribution can be described using Theorem~\ref{thm:linear_dynamics}.
\end{remark}

\section{Preliminaries}
\label{sec:prelim}
Throughout the rest of the paper, we assume that the conditions \hyperlink{as:markov_process}{(A1)-(A4)} are satisfied.
In order to understand the limiting distribution of $S_T$, we study the limiting behavior of the characteristic function, which can be represented as a product of the operators $Q(t,\alpha,\beta)$. Since $T$ is not necessarily an integer, we define a special operator $\tilde Q(t,T)$ that will serve as the last factor in the product. Namely, for each $f\in\mathcal B$, we define
\begin{equation}
\label{def:start_end_operator}
    \Tilde Q(t,T)f(x)=\E_x\left[\exp\left(it\cdot\int_0^{T-\lt}b\left(\frac{\lt+s}{T},X_s\right)ds \right)f\left(X_{T-\lt}\right)\right].
\end{equation}
From the definition \eqref{eq:def_of_S} of $S_T$, we immediately get the following lemma.
\begin{lemma}
\label{lem:Nagaev}
     For each $f\in\mathcal B$ and each probability measure $\mu$,
    \begin{equation}
    \label{eq:nagaev}
    \E_\mu[e^{it\cdot S_T}f(X_{T})]=\left\langle\mu,\prod_{k=0}^{\lt-1}Q\left(t,\frac{k}{T},\frac{k+1}{T}\right)\Tilde Q(t,T)f\right\rangle,
    \end{equation}
    where the product of the operators $\prod_{k=1}^n A_k$ means $A_1A_2...A_n$. 
\end{lemma}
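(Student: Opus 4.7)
The plan is to split the integral $S_T$ at the integer times $1,2,\dots,\lt$ and then peel off factors from the product one at a time by iterated use of the Markov property.

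First I would decompose
\begin{equation*}
    S_T = \sum_{k=0}^{\lt-1} I_k + J, \qquad I_k = \int_k^{k+1} b(s/T, X_s)\,ds, \qquad J = \int_{\lt}^{T} b(s/T, X_s)\,ds,
\end{equation*}
and in each piece change variables to shift time back to $0$. For $I_k$, setting $s = k+u$ gives
\begin{equation*}
    I_k = \int_0^1 b\!\left(\tfrac{k}{T} + u\bigl(\tfrac{k+1}{T}-\tfrac{k}{T}\bigr), X_{k+u}\right)du,
\end{equation*}
so the integrand matches the one in the definition \eqref{def:Q(t,alpha,beta)} of $Q(t,k/T,(k+1)/T)$ applied to the shifted process $\{X_{k+u}\}_{u\ge 0}$. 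Similarly, setting $s = \lt + u$ in $J$ yields
\begin{equation*}
    J = \int_0^{T-\lt} b\!\left(\tfrac{\lt+u}{T}, X_{\lt+u}\right) du,
\end{equation*}
which matches the integrand in \eqref{def:start_end_operator}.

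Next, I would apply the Markov property at time $\lt$: since $e^{it\sum_{k=0}^{\lt-1}I_k}$ is $\mathcal F_{\lt}$-measurable and, conditionally on $\mathcal F_{\lt}$, the shifted process $\{X_{\lt+u}\}_{u\ge 0}$ has the same law as $\{X_u^{X_{\lt}}\}_{u\ge 0}$, we get
\begin{equation*}
    \E_\mu\!\left[e^{it\cdot J} f(X_T)\,\Big|\,\mathcal F_{\lt}\right] = \E_{X_{\lt}}\!\left[\exp\!\left(it\cdot\!\!\int_0^{T-\lt} b\!\left(\tfrac{\lt+u}{T},X_u\right) du\right)f(X_{T-\lt})\right] = \tilde Q(t,T)f(X_{\lt}).
\end{equation*}
Hence
\begin{equation*}
    \E_\mu\!\left[e^{it\cdot S_T}f(X_T)\right] = \E_\mu\!\left[e^{it\sum_{k=0}^{\lt-1}I_k}\,\tilde Q(t,T)f(X_{\lt})\right].
\end{equation*}

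Then I would iterate. Conditioning on $\mathcal F_{\lt-1}$ and applying the Markov property at time $\lt-1$ to the factor $e^{it\cdot I_{\lt-1}}\tilde Q(t,T)f(X_{\lt})$, the above change of variables identifies the conditional expectation as $Q\!\left(t,\tfrac{\lt-1}{T},\tfrac{\lt}{T}\right)\tilde Q(t,T)f(X_{\lt-1})$. Descending inductively through $k = \lt-2, \lt-3,\ldots,0$ (at each step pulling out the outermost $e^{it\cdot I_k}$ and applying the Markov property at time $k$) yields, after $\lt$ steps,
\begin{equation*}
    \E_\mu\!\left[e^{it\cdot S_T}f(X_T)\right] = \E_\mu\!\left[\prod_{k=0}^{\lt-1} Q\!\left(t,\tfrac{k}{T},\tfrac{k+1}{T}\right)\tilde Q(t,T)f\,(X_0)\right] = \left\langle \mu,\prod_{k=0}^{\lt-1} Q\!\left(t,\tfrac{k}{T},\tfrac{k+1}{T}\right)\tilde Q(t,T)f\right\rangle,
\end{equation*}
which is the claim. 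There is no real obstacle here; the only point requiring care is the ordering convention, namely that $\prod_{k=0}^{\lt-1}A_k$ means $A_0 A_1\cdots A_{\lt-1}$ with $A_{\lt-1}$ acting first, so that the innermost conditional expectation corresponds to the rightmost factor, matching the order in which the Markov property is applied.
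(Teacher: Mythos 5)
Your proof is correct and is exactly the computation the paper treats as immediate from the definitions: decompose $S_T$ over the unit intervals $[k,k+1]$ plus the tail $[\lt,T]$, match the change of variables with the definitions \eqref{def:Q(t,alpha,beta)} and \eqref{def:start_end_operator}, and iterate the Markov property from time $\lt$ backwards, respecting the stated left-to-right ordering convention for $\prod_{k=0}^{\lt-1}$. Nothing to add.
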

The operator in \eqref{def:start_end_operator} does not play a role in the limiting distribution and the introduction of it is purely for technical reasons. We will not touch it until in the proof of the main result.
It will be proved, using Assumption \hyperlink{as:non-arithmetic}{(A4)}, that the value of \eqref{eq:nagaev} decays exponentially fast as $T\to\infty$ if $t$ is away from the origin.
To deal with the situation where $t$ is close to the origin, we need to study the behavior of $Q(t,\alpha,\beta)$ near the origin.
In fact, since $Q(t,\alpha,\beta)$ can be treated as a perturbation of $Q=Q(0,\alpha,\beta)$, the decomposition in the form of \eqref{eq:multi-decomposition} and the quasi-compactness can be extended locally to all $t$ close to the origin. 
This result is given explicitly in the following perturbation theorem.
The theorem is similar to Theorem 3.8 in \cite{HennionHerve}.
The proof is also similar except that now we need to deal with the dependence of the operator $Q(t,\alpha,\beta)$ on the parameters $\alpha$ and $\beta$ due to the time-inhomogeneity of the function $b$, and we need to establish additional differentiability of various terms in the decomposition and prove the existence of a neighborhood $I$ that would work for all the values of the parameters.
We include the proof to address these points.

\begin{theorem}[Perturbation Theorem]
\label{thm:multi-perturbation}
    There exist a neighborhood $I$ of the origin, $C>0$, and $r\in(0,1)$ such that, for $t\in I$, $\alpha,\beta\in[0,1]$, $Q(t,\alpha,\beta)$ has the following decomposition:
    \begin{equation}
        \label{eq:decomposition}
        Q(t,\alpha,\beta)=\lambda(t,\alpha,\beta)v(t,\alpha,\beta)\otimes\varphi(t,\alpha,\beta)+N(t,\alpha,\beta),
    \end{equation} 
    where 
    \begin{enumerate}[(i)]
        \item $\lambda(t,\alpha,\beta)\in\mathcal C^2(I\times[0,1]^2,\mathbb C)$;
        \item $v(t,\alpha,\beta)\in\mathcal C^2(I\times[0,1]^2,\mathcal B)$ and $\|v(t,\alpha,\beta)\|=1$;
        \item $\varphi(t,\alpha,\beta)\in\mathcal C^2(I\times[0,1]^2,\mathcal B')$ and $\langle \varphi(t,\alpha,\beta),v(t,\alpha,\beta)\rangle =1$;
        \item $\mathcal B=F(t,\alpha,\beta)\oplus H(t,\alpha,\beta)$, $F(t,\alpha,\beta)=\mathrm {span}\{v(t,\alpha,\beta)\}$, and $H(t,\alpha,\beta)=\{h:\langle \varphi(t,\alpha,\beta),h\rangle =0\}$;
        \item $N(t,\alpha,\beta)=Q(t,\alpha,\beta)\Pi_{H(t,\alpha,\beta)}\in\mathcal C^2(I\times[0,1]^2,\mathcal L_{\mathcal B})$ and $r(N(t,\alpha,\beta))/|\lambda(t,\alpha,\beta)|<r$ for all $t\in I$ and $\alpha,\beta\in[0,1]$;
        \item $\|\nabla_t^2(N(t,\alpha,\beta)^n)\|<C$ for all $\alpha,\beta\in[0,1]$ and $n\in\mathbb N$.
    \end{enumerate}
    
\end{theorem}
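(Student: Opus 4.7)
The plan is to carry out Kato-type analytic perturbation theory for $Q(t,\alpha,\beta)$ around its value $Q$ at $t=0$, using Cauchy integrals of the resolvent. The key enabling observation is that the unperturbed operator $Q$ is independent of $(\alpha,\beta)$, so the contours used to build the spectral projection can be fixed once and for all; uniformity in $(\alpha,\beta)\in[0,1]^2$ then follows from compactness and continuity.

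First I would establish a uniform perturbation estimate. The bound $|e^{iy}-1|\le|y|$ applied in \eqref{def:Q(t,alpha,beta)} gives $\|Q(t,\alpha,\beta)-Q\|\le|t|\,\|b\|_\infty$, and Assumption \hyperlink{as:zero_mean}{(A3)} similarly yields uniform bounds on $\partial_tQ$ and $\partial_t^2Q$ with $C^2$-dependence on $(\alpha,\beta)$. Fix $r_0\in(r(N),1)$, with $N$ as in \eqref{eq:multi-decomposition}, and $R\in(r_0,1)$; let $\gamma$ be a small circle around $1$ contained in the resolvent set of $Q$, and $\gamma'=\{|z|=R\}$. By the perturbation estimate and a Neumann series argument, there is a neighborhood $I$ of $0$ such that $\gamma$ and $\gamma'$ lie in the resolvent set of $Q(t,\alpha,\beta)$ for every $(t,\alpha,\beta)\in I\times[0,1]^2$, with $\sup_{z\in\gamma\cup\gamma'}\|(z-Q(t,\alpha,\beta))^{-1}\|$ bounded uniformly. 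Define
\begin{equation*}
P(t,\alpha,\beta)=\frac{1}{2\pi i}\oint_{\gamma}(z-Q(t,\alpha,\beta))^{-1}dz,
\end{equation*}
a $C^2$-family of rank-one projections (rank stability follows from $\|P(t,\alpha,\beta)-1\otimes\nu\|<1$ after shrinking $I$).

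From $P$ I would extract the rest of the ingredients. Take $v(t,\alpha,\beta)$ proportional to $P(t,\alpha,\beta)\cdot 1$ and $\varphi(t,\alpha,\beta)$ proportional to $P(t,\alpha,\beta)^{*}\nu$, rescaled so that $P=v\otimes\varphi$ and $\langle\varphi,v\rangle=1$; the denominators are nonzero and $C^2$ near $t=0$ since $P(0,\alpha,\beta)\cdot 1=1$ and $\langle\nu,1\rangle=1$. The eigenvalue $\lambda(t,\alpha,\beta)=\langle\varphi,Qv\rangle$ satisfies $\lambda(0,\alpha,\beta)=1$ and inherits $C^2$ regularity. Setting $N(t,\alpha,\beta)=Q(t,\alpha,\beta)(I-P(t,\alpha,\beta))$ gives (i)--(iv); for (v), the spectrum of $N(t,\alpha,\beta)$ lies inside $\gamma'$ by construction, so $r(N)\le R$, and after shrinking $I$ so that $|\lambda|>\sqrt{R}$ one obtains $r(N)/|\lambda|\le\sqrt{R}<1$.

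The bound (vi) comes from the functional-calculus identity
\begin{equation*}
N(t,\alpha,\beta)^n=Q(t,\alpha,\beta)^n(I-P(t,\alpha,\beta))=\frac{1}{2\pi i}\oint_{\gamma'}z^n(z-Q(t,\alpha,\beta))^{-1}dz.
\end{equation*}
Differentiating twice under the integral and using $\partial_tR_z=R_z(\partial_tQ)R_z$ and $\partial_t^2R_z=R_z(\partial_t^2Q)R_z+2R_z(\partial_tQ)R_z(\partial_tQ)R_z$, where $R_z:=(z-Q(t,\alpha,\beta))^{-1}$, together with the uniform bounds above yields $\|\nabla_t^2 N^n\|\le C\,R^n$, which is uniformly bounded in $n$ (in fact geometrically decaying). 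The delicate aspect of the argument is making sure that the neighborhood $I$, the contours, the resolvent bounds, and the normalizations producing smooth $v$ and $\varphi$ can all be chosen uniformly in $(\alpha,\beta)\in[0,1]^2$; this is delivered by the $(\alpha,\beta)$-independence of the unperturbed operator $Q$ and compactness of $[0,1]^2$, together with the continuity of $b,\partial_\alpha b,\partial_\alpha^2 b$ supplied by Assumption \hyperlink{as:zero_mean}{(A3)}.
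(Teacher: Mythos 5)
Your proposal is correct and takes a genuinely different route from the paper. The paper builds the perturbed eigendata by hand via the implicit function theorem: it introduces the map $F(t,\alpha,\beta,h)=\frac{Q(t,\alpha,\beta)(1+h)}{\langle\nu,Q(t,\alpha,\beta)(1+h)\rangle}-(1+h)$ on $H=\{f:\langle\nu,f\rangle=0\}$, checks that $\partial_h F$ at the base point is $Q|_H-1$ (invertible by quasi-compactness), solves for $h(t,\alpha,\beta)$, normalizes to get $v$ and $\lambda$, then repeats a dual IFT argument for $\varphi$, and finally obtains uniform constants and a uniform neighborhood $I$ by covering $[0,1]^2$ by finitely many of the local neighborhoods $J(\alpha_0,\beta_0)$. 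Its bound for item (vi) is obtained by expanding $\nabla_t^2(N^n)$ into $O(n^2)$ terms and bounding each. You instead take the Riesz-projection route: define $P(t,\alpha,\beta)$ by a Cauchy contour integral of the resolvent, extract $v,\varphi,\lambda$ from $P$, and obtain (vi) via the Dunford formula $N^n=\frac{1}{2\pi i}\oint_{\gamma'}z^n(z-Q(t,\alpha,\beta))^{-1}dz$ and differentiation under the integral. What your approach buys is transparency of the uniformity in $(\alpha,\beta)$: since the unperturbed operator $Q=Q(0,\alpha,\beta)$ is independent of $(\alpha,\beta)$ and $\|Q(t,\alpha,\beta)-Q\|\le |t|\,\|b\|_\infty$ uniformly, a single choice of contours and a single Neumann-series estimate work for all $(\alpha,\beta)$ at once, so you never need the finite-subcover argument the paper runs in step (4). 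It also yields a sharper form of (vi), namely a bound $\|\nabla_t^2 N^n\|\le C R^n$ without the extraneous polynomial factor $n^2$.

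One step deserves to be made explicit. To conclude both that $\sigma\bigl(N(t,\alpha,\beta)\bigr)\subset\{|z|<R\}$ and that $N^n=\frac{1}{2\pi i}\oint_{\gamma'}z^n(z-Q(t,\alpha,\beta))^{-1}\,dz$, it is not enough to know that $\gamma$ and $\gamma'$ lie in the resolvent set of $Q(t,\alpha,\beta)$: you also need the whole region $\{z:\,|z|\ge R\}\setminus\operatorname{int}(\gamma)$ to be free of spectrum, so that the Riesz projection associated to $\gamma'$ is exactly $I-P(t,\alpha,\beta)$ and the two contours together enclose all of $\sigma(Q(t,\alpha,\beta))$. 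This is handled by the same Neumann-series estimate you already invoke, applied on the compact set $\{z:\,R\le|z|\le 2\}\setminus\operatorname{int}(\gamma)$ where $\|(z-Q)^{-1}\|$ is bounded by continuity, together with the trivial bound $\|(z-Q)^{-1}\|\le(|z|-\|Q\|)^{-1}$ for $|z|$ large; but as written the argument only controls the resolvent on the two contours themselves, and a reader could object that spectrum might sit in the annulus. Also, you write $\lambda(t,\alpha,\beta)=\langle\varphi,Qv\rangle$ where you mean $\langle\varphi(t,\alpha,\beta),Q(t,\alpha,\beta)v(t,\alpha,\beta)\rangle$; that is clearly a slip. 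With these two points spelled out, your argument is a complete and, in the uniformity step, cleaner alternative to the paper's proof.
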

\begin{proof}
    (1) Let us temporarily fix $\alpha_0,\beta_0\in[0,1]$. Recall that $H=\{f\in\mathcal B:\langle\nu,f\rangle=0\}$. Note that for $h\in H$, $\alpha_0,\beta_0\in[0,1]$,
        \begin{equation*}
            \langle\nu,Q(0,\alpha_0,\beta_0)(1+h)\rangle=\langle\nu,Q(1+h)\rangle=\langle Q^*\nu,1+h\rangle=\langle\nu,1+h\rangle=1.
        \end{equation*}
        There exists a neighborhood $J_1(\alpha_0,\beta_0)$ in $\mathbb R^d\times[0,1]^2$ of $(0,\alpha_0,\beta_0)$ such that for all $(t,\alpha,\beta)\in J_1(\alpha_0,\beta_0)$ and $h\in H$ of norm no more than $1$, we have $\langle\nu,Q(t,\alpha,\beta)(1+h)\rangle\not=0$. Then we can define a function $F:J_1(\alpha_0,\beta_0)\times \{h\in H:\|h\|\leq1\}\to H$ by
        \begin{equation*}
            F(t,\alpha,\beta,h)=\frac{Q(t,\alpha,\beta)(1+h)}{\langle\nu,Q(t,\alpha,\beta)(1+h)\rangle}-(1+h).
        \end{equation*}
        Note that $F(0,\alpha_0,\beta_0,0)=0$. By the fact that
        \begin{equation*}
            F(0,\alpha_0,\beta_0,h)=\frac{Q(1+h)}{\langle\nu,Q(1+h)\rangle}-(1+h)=Qh-h,
        \end{equation*}
        we see that the partial differential of $F$ at the point $(0,\alpha_0,\beta_0,0)$ w.r.t. $h$ is $Q|_{H}-1$ which is an isomorphism of $H$ since $1$ is not in the spectrum of $Q|_{H}$ due to \eqref{eq:multi-decomposition}. 
        Hence, by the implicit function theorem, 
        there exists a neighborhood $J_2(\alpha_0,\beta_0)\subset J_1(\alpha_0,\beta_0)$ of $(0,\alpha_0,\beta_0)$, a neighborhood $H_0\subset H$ of $0$, and a unique function $h(t,\alpha,\beta)\in\mathcal C^2(J_2(\alpha_0,\beta_0),H_0)$ such that for all $(t,\alpha,\beta)\in J_2(\alpha_0,\beta_0)$, $F(t,\alpha,\beta,h(t,\alpha,\beta))=0$. 
        And we define \begin{align*}
            v(t,\alpha,\beta)&=\frac{1+h(t,\alpha,\beta)}{\|1+h(t,\alpha,\beta)\|}\in\mathcal C^2(J_2(\alpha_0,\beta_0),\mathcal B)\\  \lambda(t,\alpha,\beta)&=\langle\nu,Q(t,\alpha,\beta)(1+h(t,\alpha,\beta))\rangle\in\mathcal C^2(J_2(\alpha_0,\beta_0),\mathbb C).
        \end{align*}
        Note that $h(0,\alpha,\beta)=0$, $v(0,\alpha,\beta)=1$, and $\lambda(0,\alpha,\beta)=1$ on $J_2(\alpha_0,\beta_0)$.

    (2) For $f\in\mathcal B$, let $f^\perp=\{\psi\in\mathcal B':\langle\psi,f\rangle=0\}$.
    Let $H'=1^{\perp}=\{\psi\in\mathcal B':\langle\psi,1\rangle=0\}$. Define the function $G_1: J_2(\alpha_0,\beta_0)\times H'\to \mathcal B'$ by
    \begin{equation*}
        G_1(t,\alpha,\beta,\psi)=Q(t,\alpha,\beta)^*(\nu+\psi)-\lambda(t,\alpha,\beta)(\nu+\psi).
    \end{equation*}
    Note that $\mathcal B'=H'\oplus \mathrm{span}\{\nu\}$. Define the projection $\pi$ onto $H'$ by $\pi\psi=\psi-\langle\psi,1\rangle\nu$ for each $\psi\in\mathcal B'$,
    and the function $G:J_2(\alpha_0,\beta_0)\times H'\to H'$ by the composition $G=\pi\circ G_1$. 
    By the continuity of $v(t,\alpha,\beta)$, there exists a neighborhood $J_3(\alpha_0,\beta_0)$ smaller than $J_2(\alpha_0,\beta_0)$ such that $\langle\nu,v(t,\alpha,\beta)\rangle\not=0$ for all $(t,\alpha,\beta)\in J_3(\alpha_0,\beta_0)$. So, for each $\psi\in\mathcal B'$,
    \begin{equation*}
        \psi=\frac{\langle\psi,v(t,\alpha,\beta)\rangle}{\langle\nu,v(t,\alpha,\beta)\rangle}\nu+\psi-\frac{\langle\psi,v(t,\alpha,\beta)\rangle}{\langle\nu,v(t,\alpha,\beta)\rangle}\nu,
    \end{equation*}
    and $\mathcal B'=v(t,\alpha,\beta)^\perp\oplus \mathrm{span}\{\nu\}$. Therefore, for each $(t,\alpha,\beta)\in J_3(\alpha_0,\beta_0)$, $\pi$ is bijective from $v(t,\alpha,\beta)^\perp$ to $H'$.
    On the other hand, since $G(0,\alpha_0,\beta_0,0)=0$ and the partial differential of $G$ at point $(0,\alpha_0,\beta_0,0)$ w.r.t. $\psi$ is $Q^*|_{H'}-1$, which is an isomorphism of $H'$, we can find a neighborhood $J_4(\alpha_0,\beta_0)\subset J_3(\alpha_0,\beta_0)$ of $(0,\alpha_0,\beta_0)$, a neighborhood $H_0'\subset H'$ of $0$, and a unique function $\psi\in\mathcal C^2(J_4(\alpha_0,\beta_0), H_0')$ such that, for all $(t,\alpha,\beta)\in J_4(\alpha_0,\beta_0)$, $G(t,\alpha,\beta,\psi(t,\alpha,\beta))=0$ and $\langle\nu+\psi(t,\alpha,\beta),v(t,\alpha,\beta)\rangle\not=0$. 
    By the fact that $\pi$ is bijective from $v(t,\alpha,\beta)^\perp$ to $H'$, $G_1(t,\alpha,\beta,\psi(t,\alpha,\beta))=0$. 
    Then it is clear that $\nu+\psi(t,\alpha,\beta)$ is an eigenvector of $Q^*(t,\alpha,\beta)$. 
    We define \begin{equation*}
        \varphi(t,\alpha,\beta)=\frac{\nu+\psi(t,\alpha,\beta)}{\langle\nu+\psi(t,\alpha,\beta),v(t,\alpha,\beta)\rangle}\in\mathcal C^2(J_4(\alpha_0,\beta_0),\mathcal B'),
    \end{equation*} and define $N(t,\alpha,\beta)\in C^2(J_4(\alpha_0,\beta_0),\mathcal L_{\mathcal B})$ using \eqref{eq:decomposition}. 
    Again, it is clear that $\psi(0,\alpha,\beta)=0$, $\varphi(0,\alpha,\beta)=\nu$, and $N(0,\alpha,\beta)=N$ on $J_4(\alpha_0,\beta_0)$.

    (3) For each $f\in\mathcal B$, we have the equality \begin{equation*}
        f=\langle\varphi(t,\alpha,\beta),f\rangle v(t,\alpha,\beta)+(f-\langle\varphi(t,\alpha,\beta),f\rangle v(t,\alpha,\beta)).
    \end{equation*} Hence, we have the decomposition of $\mathcal B$ as claimed, and it is not hard to verify that $N(t,\alpha,\beta)=Q(t,\alpha,\beta)\Pi_{H(t,\alpha,\beta)}$. Observe that $r(N)<1$ (by the quasi-compactness of $Q$), $\lambda(t,\alpha,\beta)$ and $N(t,\alpha,\beta)$ are continuous, and spectral radius is an upper-semicontinuous function of the operator. Therefore, $r(N(t,\alpha,\beta))/|\lambda(t,\alpha,\beta)|<r:=(r(N)+1)/2<1$ holds on a neighborhood $J_5(\alpha_0,\beta_0)\subset J_4(\alpha_0,\beta_0)$.

    Finally, to get the bound in (\romannumeral6), we express $\nabla_t^2(N(t,\alpha,\beta)^n)$ as a sum of $n^2 $ terms of the forms
    \begin{equation}
    \label{eq:T_form1}
        V=N(t,\alpha,\beta)^{n_1}\nabla_t N(t,\alpha,\beta)N(t,\alpha,\beta)^{n_2}\nabla_t N(t,\alpha,\beta)^*N(t,\alpha,\beta)^{n_3}
    \end{equation} with $n_1+n_2+n_3=n-2$, $n_1,n_2,n_3\geq0$,
    or
    \begin{equation}
    \label{eq:T_form2}
        V=N(t,\alpha,\beta)^{n_1}\nabla_t^2 N(t,\alpha,\beta)N(t,\alpha,\beta)^{n_2}
    \end{equation}with $n_1+n_2=n-1$, $n_1,n_2\geq0$. 
    Let us choose $m_0\in\mathbb N$ such that $\|N^{m_0}\|<(\frac{2r(N)+1}{3})^{m_0}$. 
    Since $N(t,\alpha,\beta)\in\mathcal C^2(I\times[0,1]^2,\mathcal L_{\mathcal B})$, there exist a neighborhood $J(\alpha_0,\beta_0)\subset J_5(\alpha_0,\beta_0)$ and a constant $C_1(\alpha_0,\beta_0)\geq1$ such that $\|N(t,\alpha,\beta)^{m_0}\|<r^{m_0}$, and $N(t,\alpha,\beta)$, $\nabla_tN(t,\alpha,\beta)$, and $\nabla_t^2N(t,\alpha,\beta)$ are bounded by $C_1(\alpha_0,\beta_0)$ on $J(\alpha_0,\beta_0)$. 
    Let $C_2(\alpha_0,\beta_0)=(C_1(\alpha_0,\beta_0)/r)^{m_0}$. 
    Then, for every $m\in\mathbb N$, with $m=km_0+j$, $0\leq j<m_0$, we have on $J(\alpha_0,\beta_0)$ that
    \begin{equation*}
        \|N(t,\alpha,\beta)^m\|\leq \|N(t,\alpha,\beta)^{m_0}\|^k\cdot \|N(t,\alpha,\beta)\|^j\leq r^m\cdot\frac{C_1(\alpha_0,\beta_0)^j}{r^j}\leq C_2(\alpha_0,\beta_0) r^m.
    \end{equation*}
    Hence, we obtain that, for each term $V$ in the form of \eqref{eq:T_form1} or \eqref{eq:T_form2}, $\|V||\leq C_2(\alpha_0,\beta_0)^5\cdot r^{n-2}$. Since there are $n^2$ terms in total, we have that 
    \begin{equation*}
        \|\nabla_t^2(N(t,\alpha,\beta)^n)\|\leq C_2(\alpha_0,\beta_0)^5n^2\cdot r^{n-2}\to0,
    \end{equation*}
    as $n\to\infty$. Therefore, the left-hand side is bounded uniformly in $n$ and $(t,\alpha,\beta)\in J(\alpha_0,\beta_0)$.
    
    (4) So far, we showed that $\lambda(t,\alpha,\beta),v(t,\alpha,\beta)$, $\varphi(t,\alpha,\beta)$, and $N(t,\alpha,\beta)$ that satisfy the decomposition \eqref{eq:decomposition} and the conditions (\romannumeral1)-(\romannumeral6) stated in the theorem exist  on $\bigcup_{\alpha_0,\beta_0\in[0,1]}J(\alpha_0,\beta_0)$. 
    Then, by the compactness of $[0,1]^2$, we can find a neighborhood $I$ of the origin in $\mathbb R^d$ such that the functions above exist on $I\times[0,1]^2$ and the conditions (\romannumeral5) and (\romannumeral6) are satisfied with uniform constants.
\end{proof}

\section{Product of the operators}
\label{sec:product}
As suggested by Lemma~\ref{lem:Nagaev}, the main ingredient of the proof is to study the limiting behavior of the product of the operators. In this section, we deal with two situations where $t$ is close to or away from the origin. The product contains the information about the characteristics of the limiting distribution in the first situation and vanishes as $T$ tends to infinity in the second situation. Our arguments involve different products. To avoid confusion, we use the convention: for $p_k\in\mathbb C$, $\prod_{k=j}^{j-1}p_j=1$; and, for $P_k\in\mathcal L_{\mathcal B}$, $\prod_{k=j}^{j-1}P_j=\mathrm{id}$ is the identity operator.

\subsection{For \texorpdfstring{$t$}{t} near \texorpdfstring{$0$}{0}}
In this section, we apply the perturbation theorem to deal with the product where $t$ is close to the origin. 
To present cleaner arguments, we assume that the operator $N$ in \eqref{eq:multi-decomposition} has its norm less than $1$. 
This assumption is not restrictive since, by the uniform ergodicity assumption \hyperlink{as:mixing}{(A2)}, if we define $Q=P_n$ with $n$ large enough instead of $Q=P_1$, then the total variation between the distribution of $X_n$ and the invariant measure $\nu$ will be small enough, and the norm of $N$ will be less than $1$. 
The perturbation theorem still holds with the relation in $\textit{(\romannumeral5)}$ replaced by 
\begin{equation}
\label{eq:additional_assumption}
    \|N(t,\alpha,\beta)\|/|\lambda(t,\alpha,\beta)|<r,
\end{equation} and the left-hand side of \eqref{eq:nagaev} can still be represented as a product of those perturbed operators.
\begin{proposition}
\label{prop:multi-operator_product}
Let $I$ be defined as in Theorem~\ref{thm:multi-perturbation}. There exist constants $M$ and $N$ large enough such that, for all $T>N$ and all $t\in I$, we have the formula for the product of the operators, for $f\in\mathcal B$ and $0\leq i\leq\lt-1$,
    \begin{align}
        &\prod_{k=i}^{\lt-1} Q\left(t,\frac{k}{T},\frac{k+1}{T}\right)f\nonumber\\
        &=\prod_{k=i}^{\lt-1}\lambda\left(t,\frac{k}{T},\frac{k+1}{T}\right)\prod_{k=i}^{\lt-2}\left\langle \varphi\left(t,\frac{k}{T},\frac{k+1}{T}\right),v\left(t,\frac{k+1}{T},\frac{k+2}{T}\right)\right\rangle \nonumber\\
        &\quad\quad\quad\quad\cdot\left\langle \varphi\left(t,\frac{\lt-1}{T},\frac{\lt}{T}\right),f\right\rangle v\left(t,\frac{i}{T},\frac{i+1}{T}\right)\nonumber\\
        &+\prod_{k=i}^{\lt-1}\lambda\left(t,\frac{k}{T},\frac{k+1}{T}\right)\|f\|\left(p_{i,T}^t\cdot  v\left(t,\frac{i}{T},\frac{i+1}{T}\right)+q_{i,T}^t\cdot h\left(t,\frac{i}{T},\frac{i+1}{T}\right)\right), \label{eq:multi-operator_product}  
    \end{align}
    where $h(t,\alpha,\beta)\in H(t,\alpha,\beta)$, $\|h(t,\alpha,\beta)\|=1$, $|p_{i,T}^t|<M/T$, and $|q_{i,T}^t|<r^{\lt-i}+M/2T$. In particular, $|p_{0,T}^t|,|q_{0,T}^t|<M/T$. The bounds on $p_{i,T}^t,q_{i,T}^t$ hold uniformly in $f\in\mathcal B$.
\end{proposition}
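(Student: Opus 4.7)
My plan is to prove~\eqref{eq:multi-operator_product} by downward induction on $i$ from $\lt-1$ to $0$. Abbreviate the perturbation-theorem data at parameters $(t, k/T, (k+1)/T)$ by $\lambda_k$, $v_k$, $\varphi_k$, $N_k$, and set $\Phi_i = \prod_{k=i}^{\lt-1} Q(t,k/T,(k+1)/T)f$. The base case $i=\lt-1$ is immediate: $Q_{\lt-1}f = \lambda_{\lt-1}\langle \varphi_{\lt-1},f\rangle v_{\lt-1} + N_{\lt-1}f$ already has the form of~\eqref{eq:multi-operator_product} (with empty products equal to $1$), giving $p_{\lt-1,T}^t = 0$ and $|q_{\lt-1,T}^t| = \|N_{\lt-1}f\|/(|\lambda_{\lt-1}|\|f\|) < r$ by~\eqref{eq:additional_assumption}.

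For the inductive step, apply $Q_i = \lambda_i v_i\otimes\varphi_i + N_i$ to the inductive form of $\Phi_{i+1} = [A_{i+1} + \Lambda\|f\|p_{i+1,T}^t]v_{i+1} + \Lambda\|f\|q_{i+1,T}^t h_{i+1}$, where $\Lambda = \prod_{k=i+1}^{\lt-1}\lambda_k$. The rank-one piece collapses into $v_i$ with coefficient $\lambda_i\langle\varphi_i,\Phi_{i+1}\rangle$, which telescopes: $\lambda_i A_{i+1}\langle\varphi_i,v_{i+1}\rangle = A_i$, plus a $\Lambda'\|f\|$-scaled residual (with $\Lambda' = \lambda_i\Lambda$)
\begin{equation*}
p_{i,T}^t = p_{i+1,T}^t\langle\varphi_i,v_{i+1}\rangle + q_{i+1,T}^t\langle\varphi_i,h_{i+1}\rangle.
\end{equation*}
The remaining term $N_i\Phi_{i+1}$ lies in $H_i$ and becomes $\Lambda'\|f\|q_{i,T}^t h_i$ after rescaling.

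The argument hinges on three smoothness inputs from Theorem~\ref{thm:multi-perturbation}: (a) $|\langle\varphi_i,v_{i+1}\rangle - 1| = |\langle\varphi_i,v_{i+1}-v_i\rangle| \leq C_4/T$, since $\langle\varphi_i,v_i\rangle = 1$; (b) $|\langle\varphi_i,h_{i+1}\rangle| = |\langle\varphi_i - \varphi_{i+1},h_{i+1}\rangle| \leq C_4/T$, since $h_{i+1}\in H_{i+1} = \ker\varphi_{i+1}$; and (c) $\|N_i v_{i+1}\| = \|N_i(v_{i+1}-v_i)\| \leq r|\lambda_i|C_4/T$, combining the exact identity $N_iv_i = 0$ with~\eqref{eq:additional_assumption}. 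Here $C_4$ bounds the $(\alpha,\beta)$-Lipschitz constants of $v$ and $\varphi$ on $I\times[0,1]^2$. Noting that $|A_{i+1}|/(|\Lambda|\|f\|)$ is uniformly bounded (being a product of $\langle\varphi_k,v_{k+1}\rangle = 1+O(1/T)$ factors times $\langle\varphi_{\lt-1},f\rangle/\|f\|$), these estimates yield the coupled recursions
\begin{align*}
|p_{i,T}^t| &\leq |p_{i+1,T}^t|(1+C_4/T) + (C_4/T)|q_{i+1,T}^t|,\\
|q_{i,T}^t| &\leq C_3/T + r|q_{i+1,T}^t|,
\end{align*}
with $C_3, C_4$ uniform in $f$. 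Unfolding the $q$-recursion via a geometric series gives $|q_{i,T}^t| \leq C_3/(T(1-r)) + r^{\lt-i}$, which matches the claim once $M \geq 2C_3/(1-r)$; unfolding the $p$-recursion and inserting this bound produces $|p_{i,T}^t| \leq e^{C_4}C_4\bigl[r/(1-r) + M/2\bigr]/T$.

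The main obstacle is closing the $p$-estimate at the target $M/T$: the above display requires $e^{C_4}C_4 < 2$, which is not automatic from the perturbation theorem. The resolution is to shrink $I$. Because $v(0,\alpha,\beta)\equiv 1$ and $\varphi(0,\alpha,\beta)\equiv \nu$ are independent of $(\alpha,\beta)$, the $\mathcal C^2$ regularity in Theorem~\ref{thm:multi-perturbation} forces $\partial_\alpha v,\partial_\beta v,\partial_\alpha\varphi,\partial_\beta\varphi$ all to vanish at $t=0$ and therefore to be arbitrarily small, uniformly in $(\alpha,\beta)$, on any sufficiently small neighborhood of $0$; so we may take $I$ so small that $C_4$ is as small as needed. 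A matching choice of $M$ (large enough to satisfy both unfolded inequalities) and $N$ (so that the $|p_{i+1,T}^t|\leq M/T$-driven lower-order terms absorb cleanly) closes the induction uniformly in $t\in I$, $f\in\mathcal B$, $T>N$, and $0\leq i\leq\lt-1$.
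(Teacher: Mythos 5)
Your proposal follows the same architecture as the paper's proof: backward induction, decomposition into rank-one and residual parts, the three Lipschitz-type estimates on $\langle\varphi_i,v_{i+1}\rangle$, $\langle\varphi_i,h_{i+1}\rangle$, and $N_iv_{i+1}$, and unfolding the resulting coupled recursions. The genuine divergence is in how you close the induction. The paper does not need $e^{C_4}C_4 < 2$: it works directly with $a = 2K^2e^K/T$ for the $K$ supplied by Theorem~\ref{thm:multi-perturbation} and proves the refined invariant
\begin{equation*}
|p_{i,T}| \leq a\Bigl(\tfrac{r}{1-r}[(1+a)^{\lt-i}-1] + \tfrac{(1+a)^{\lt-i}-r^{\lt-i}}{1+a-r}\Bigr), \qquad |q_{i,T}| \leq r^{\lt-i} + \tfrac{ar}{1-r},
\end{equation*}
which closes for arbitrary $K$ because $(1+a)^{\lt}\leq e^{a\lt}$ stays bounded; the target $M/T$ bound then follows with $M$ depending exponentially on $K$. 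You instead keep the simple invariant $|p_{i,T}|\leq M/T$, observe that it fails to close when the product-of-recursion-coefficients is too large, and fix this by shrinking $I$ so that the $(\alpha,\beta)$-Lipschitz constants of $v,\varphi$ are small — which is legitimate since $v(0,\alpha,\beta)\equiv 1$, $\varphi(0,\alpha,\beta)\equiv\nu$, and $\mathcal C^2$ regularity pushes $\partial_\alpha v,\partial_\beta v,\partial_\alpha\varphi,\partial_\beta\varphi$ uniformly to $0$ as $t\to 0$. Both routes work; the paper's buys independence from the size of $I$ at the cost of a heavier invariant, while yours is cleaner conceptually but makes Proposition~\ref{prop:multi-operator_product} hold only after possibly replacing $I$ by a smaller neighborhood (harmless for the rest of the paper, but a change to the stated hypothesis).

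Two smaller points. First, your $q$-recursion $|q_{i,T}|\leq C_3/T + r|q_{i+1,T}|$ silently drops the term coming from $N_i$ acting on the $p_{i+1,T}^t\,v_{i+1}$ part of $\Phi_{i+1}$, which contributes an extra $O(C_4/T)\,r\,|p_{i+1,T}^t|$; the paper keeps it explicitly and then absorbs it into $C_3/T$ only after first establishing $|p_{i+1,T}^t|<1$ (which requires $T>M$, hence the role of $N$). You should state this absorption. Second, the ``In particular, $|q_{0,T}^t|<M/T$'' part of the claim still needs $r^{\lt}<M/(2T)$ for $T$ large — trivial, but worth one line.
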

The proof of this result requires careful treatment of error terms induced by the shifts of the eigenspace corresponding to the top eigenvalue. Roughly speaking, the idea is to show that, compared with the first term in \eqref{eq:multi-operator_product}, which is obtained by exclusively looking at the vectors and projections on the eigenspace, error terms that emerge because of the shifts of the eigenspace and the presence of the operator $N(t,\frac{k}{T},\frac{k+1}{T})$ are negligible. 

\begin{proof}
    We apply Theorem~\ref{thm:multi-perturbation} to control the error brought by the shift of the eigenspace. Namely, there exists $K>1$ such that the following relations hold for $t\in I$:
    \begin{enumerate}[(i)]
        \item For all $1\leq k\leq \lt-1$,
        \begin{equation}\begin{aligned}
        \label{eq:varphi_v}
            &\left|\left\langle\varphi\left(t,\frac{k-1}{T},\frac{k}{T}\right),v\left(t,\frac{k}{T},\frac{k+1}{T}\right)\right\rangle-1\right|\\
            &= \left|\left\langle\varphi\left(t,\frac{k-1}{T},\frac{k}{T}\right),v\left(t,\frac{k}{T},\frac{k+1}{T}\right)-v\left(t,\frac{k-1}{T},\frac{k}{T}\right)\right\rangle\right|\\
            &\leq K/T.
        \end{aligned}\end{equation} Consequently,
        \begin{equation}
        \label{eq:varphi_v_product}
            \left|\prod_{k=i}^{\lt-1}\left\langle\varphi\left(t,\frac{k-1}{T},\frac{k}{T}\right),v\left(t,\frac{k}{T},\frac{k+1}{T}\right)\right\rangle\right|\leq e^K.
        \end{equation}
        Moreover, by \eqref{eq:additional_assumption},
        \begin{equation}
        \label{eq:N_v}
            \begin{aligned}
                &\left\|N\left(t,\frac{k-1}{T},\frac{k}{T}\right)v\left(t,\frac{k}{T},\frac{k+1}{T}\right)\right\|\\
                &=\left\|N\left(t,\frac{k-1}{T},\frac{k}{T}\right)\Pi_{H(t,\frac{k-1}{T},\frac{k}{T})}v\left(t,\frac{k}{T},\frac{k+1}{T}\right)\right\|\\
            &\leq\left\|N\left(t,\frac{k-1}{T},\frac{k}{T}\right)\right\|\left\|\Pi_{H(t,\frac{k-1}{T},\frac{k}{T})}v\left(t,\frac{k}{T},\frac{k+1}{T}\right)\right\|\\
            &\leq r\left|\lambda\left(t,\frac{k-1}{T},\frac{k}{T}\right)\right|\left\|\Pi_{H(t,\frac{k-1}{T},\frac{k}{T})}\left(v\left(t,\frac{k}{T},\frac{k+1}{T}\right)-v\left(t,\frac{k-1}{T},\frac{k}{T}\right)\right)\right\|\\
            &\leq r\left|\lambda\left(t,\frac{k-1}{T},\frac{k}{T}\right)\right|\cdot\frac{K}{T}.
            \end{aligned}
        \end{equation}
        \item For all $1\leq k\leq \lt-1$, $h\in H(t,\frac{k}{T},\frac{k+1}{T})$, $\|h\|=1$, 
        \begin{align}
            \left|\left\langle\varphi\left(t,\frac{k-1}{T},\frac{k}{T}\right),h\right\rangle\right|&= \left|\left\langle\varphi\left(t,\frac{k-1}{T},\frac{k}{T}\right)-\varphi\left(t,\frac{k}{T},\frac{k+1}{T}\right),h\right\rangle\right|\leq \frac{K}{T},\label{eq:varphi_h}\\
            \left\|N\left(t,\frac{k-1}{T},\frac{k}{T}\right)h\right\|&\leq r\left|\lambda\left(t,\frac{k-1}{T},\frac{k}{T}\right)\right|.\label{eq:N_h}
        \end{align}
    \end{enumerate}
    Furthermore, we can choose $K$ sufficiently large so that $\|\varphi(t,\alpha,\beta)\|$ is bounded by $K$ for all $t\in I$, $\alpha,\beta\in[0,1]$.
    Let $a$ denote $2K^2e^K/T$.
    We prove by backwards induction that, for all $t\in I$, $0\leq i\leq\lfloor T\rfloor-1$, and all $T$ sufficiently large, \eqref{eq:multi-operator_product} holds with
    \begin{equation}
    \label{eq:induction}
        \begin{aligned}
            |p_{i,T}|&\leq a\cdot\left(\frac{r}{1-r}[(1+a)^{\lt-i}-1]+\frac{(1+a)^{\lt-i}-r^{\lt-i}}{1+a-r}\right),\\
            |q_{i,T}|&\leq r^{\lt-i}+a\cdot\frac{r}{1-r}.
        \end{aligned}
    \end{equation}
    In particular, it is not hard to see that this implies the desired result.
    
    Let us verify that \eqref{eq:induction} holds if $i=\lt-1$. By \eqref{eq:decomposition},
    \begin{equation*}
        \begin{aligned}
            Q\left(t,\frac{\lt-1}{T},\frac{\lt}{T}\right)f&=\lambda\left(t,\frac{\lt-1}{T},\frac{\lt}{T}\right)\left\langle\varphi\left(t,\frac{\lt-1}{T},\frac{\lt}{T}\right),f\right\rangle v\left(t,\frac{\lt-1}{T},\frac{\lt}{T}\right)\\
            &\quad+N\left(t,\frac{\lt-1}{T},\frac{\lt}{T}\right)f.
        \end{aligned}
    \end{equation*}
    We have $p_{\lt-1,T}=0$ and, by \eqref{eq:additional_assumption},
    \begin{equation*}
        \left|q_{\lt-1,T}\right|\leq \left\|N\left(t,\frac{\lt-1}{T},\frac{\lt}{T}\right)\right\|/\lambda\left(t,\frac{\lt-1}{T},\frac{\lt}{T}\right)\leq r.
    \end{equation*}
    By assuming the validity of \eqref{eq:induction} at $i$, we obtain 
    \begin{align}
        &\prod_{k=i-1}^{\lt-1}Q\left(t,\frac{k}{T},\frac{k+1}{T}\right)f\nonumber\\
        &=Q\left(t,\frac{i-1}{T},\frac{i}{T}\right)\prod_{k=i}^{\lt-1}Q\left(t,\frac{k}{T},\frac{k+1}{T}\right)f\nonumber\\
        &=\prod_{k=i-1}^{\lt-1}\lambda\left(t,\frac{k}{T},\frac{k+1}{T}\right)\prod_{k=i-1}^{\lt-2}\left\langle \varphi\left(t,\frac{k}{T},\frac{k+1}{T}\right),v\left(t,\frac{k+1}{T},\frac{k+2}{T}\right)\right\rangle\nonumber\\
        &\quad\quad\quad\quad\cdot\left\langle\varphi\left(t,\frac{\lt-1}{T},\frac{\lt}{T}\right),f\right\rangle v\left(t,\frac{i-1}{T},\frac{i}{T}\right)\nonumber\\
        &\quad+\prod_{k=i-1}^{\lt-1}\lambda\left(t,\frac{k}{T},\frac{k+1}{T}\right)\|f\|\left\langle\varphi\left(t,\frac{i-1}{T},\frac{i}{T}\right),v\left(t,\frac{i}{T},\frac{i+1}{T}\right)\right\rangle p_{i,T}v\left(t,\frac{i-1}{T},\frac{i}{T}\right)\label{term:p_k1}\\
        &\quad+\prod_{k=i-1}^{\lt-1}\lambda\left(t,\frac{k}{T},\frac{k+1}{T}\right)\|f\|\left\langle\varphi\left(t,\frac{i-1}{T},\frac{i}{T}\right),h\left(t,\frac{i}{T},\frac{i+1}{T}\right)\right\rangle q_{i,T}v\left(t,\frac{i-1}{T},\frac{i}{T}\right)\label{term:p_k2}\\
        &\quad+\prod_{k=i-1}^{\lt-1}\lambda\left(t,\frac{k}{T},\frac{k+1}{T}\right)\prod_{k=i}^{\lt-2}\left\langle \varphi\left(t,\frac{k}{T},\frac{k+1}{T}\right),v\left(t,\frac{k+1}{T},\frac{k+2}{T}\right)\right\rangle\left\langle\varphi\left(t,\frac{\lt-1}{T},\frac{\lt}{T}\right),f\right\rangle\nonumber\\
        &\quad\quad\quad\quad\cdot N\left(t,\frac{i-1}{T},\frac{i}{T}\right)v\left(t,\frac{i}{T},\frac{i+1}{T}\right)/\lambda\left(t,\frac{i-1}{T},\frac{i}{T}\right)\label{term:q_k1}\\
        &\quad+\prod_{k=i-1}^{\lt-1}\lambda\left(t,\frac{k}{T},\frac{k+1}{T}\right)\|f\|p_{i,T}N\left(t,\frac{i-1}{T},\frac{i}{T}\right)v\left(t,\frac{i}{T},\frac{i+1}{T}\right)/\lambda\left(t,\frac{i-1}{T},\frac{i}{T}\right)\label{term:q_k2}\\
        &\quad+\prod_{k=i-1}^{\lt-1}\lambda\left(t,\frac{k}{T},\frac{k+1}{T}\right)\|f\|q_{i,T}N\left(t,\frac{i-1}{T},\frac{i}{T}\right)h\left(t,\frac{i}{T},\frac{i+1}{T}\right)/\lambda\left(t,\frac{i-1}{T},\frac{i}{T}\right).\label{term:q_k3}
        \end{align}
    We deduce that, by \eqref{eq:varphi_v} and \eqref{eq:varphi_h},
    \begin{equation}
    \label{eq:induction_p}
        |p_{i-1,T}|\leq (1+a)|p_{i,T}|+a|q_{i,T}|,
    \end{equation}where the two terms on the right-hand side correspond to \eqref{term:p_k1} and \eqref{term:p_k2}, respectively; and, by \eqref{eq:varphi_v_product}, \eqref{eq:N_v}, and \eqref{eq:N_h},
    \begin{equation}
    \label{eq:induction_q}
        |q_{i-1,T}|\leq \frac{1}{2}ar+\frac{1}{2}ar|p_{i,T}|+r|q_{i,T}|,
    \end{equation}where the three terms on the right-hand side correspond to \eqref{term:q_k1}, \eqref{term:q_k2}, and \eqref{term:q_k3}, respectively.
    
    Now using \eqref{eq:induction_p} and \eqref{eq:induction_q} and backwards induction on $i$, it is easy to see that \eqref{eq:induction} holds.
    Indeed, at each step of induction, we first show that $|p_{i,T}^t|<1$. So $\eqref{eq:induction_q}$ implies that 
    \begin{equation}
    \label{eq:improved_induction_q}
        |q_{i-1,T}|\leq ar+r|q_{i,T}|,
    \end{equation}
    when assuming \eqref{eq:induction} for $p_{i,T}^t$ and $q_{i,T}^t$.
    Then \eqref{eq:induction} for $i-1$ instead of $i$ can be established using \eqref{eq:induction_p} and \eqref{eq:improved_induction_q}. As we noted, \eqref{eq:induction} implies the statement in the lemma.
\end{proof}

\subsection{For \texorpdfstring{$t$}{t} away from \texorpdfstring{$0$}{0}}
As noted earlier, we expect that the product decays exponentially fast for all $t$ that are at a uniformly positive distance from the origin, so it does not contribute to \eqref{eq:nagaev}.
\begin{lemma}
\label{lem:multi-mstepnorm}
Suppose that $P(t,\alpha,\beta)\in\mathcal C(K\times[0,1]^2,\mathcal L_{\mathcal B})$ where $K$ is a compact set in $\mathbb R^d$ and that $r(P(t,\alpha,\beta))<r_0$ for all $t\in K$, $\alpha,\beta\in[0,1]$. Then
\begin{enumerate}[(i)]
    \item There exists $r_1<r_0$ and $m_0$ such that $\|P(t,\alpha,\beta)^m\|<r_1^m$ for all $t\in K$, $\alpha,\beta\in[0,1]$, and $m\geq m_0$.
    \item There exist $r_2<r_0$ and $m_0$ such that for each $m\geq m_0$ there exists $\delta>0$ such that for all $t\in K$, $$\left\|\prod_{k=1}^m P(t,\alpha_k,\beta_k)\right\|<r_2^m$$ if $|\alpha_k-\alpha_1|<\delta$ and $|\beta_k-\beta_1|<\delta$ for all $1\leq k\leq m$.  
\end{enumerate}
\end{lemma}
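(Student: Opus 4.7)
The plan is to establish (i) by combining upper semicontinuity of the spectral radius with a compactness argument based on Gelfand's formula, and then to deduce (ii) from (i) using uniform continuity of the product map.

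For (i), I first use that the spectral radius is upper semicontinuous on $\mathcal L_{\mathcal B}$, so $(t,\alpha,\beta)\mapsto r(P(t,\alpha,\beta))$ is upper semicontinuous on the compact set $K\times[0,1]^2$ and attains its supremum $r^*$; the pointwise hypothesis gives $r^*<r_0$. I fix $r'\in(r^*,r_0)$ and $r_1\in(r',r_0)$. By Gelfand's formula, for each $x_0\in K\times[0,1]^2$ there is $m(x_0)\in\mathbb N$ with $\|P(x_0)^{m(x_0)}\|<(r')^{m(x_0)}$. Since $x\mapsto P(x)^{m(x_0)}$ is continuous in the operator norm, the same inequality persists on some open neighborhood $U(x_0)$. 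Compactness yields a finite subcover with corresponding exponents $m_1,\ldots,m_N$; set $M=\max_i m_i$ and $C=\sup_x\|P(x)\|$, both finite. Given any $x$, pick $i$ with $x\in U(x_i)$ and write $m=km_i+j$, $0\leq j<m_i$; then
\begin{equation*}
\|P(x)^m\|\leq C^j(r')^{km_i}\leq (C/r')^M(r')^m,
\end{equation*}
so $\|P(x)^m\|\leq r_1^m\,(C/r')^M(r'/r_1)^m$, and choosing $m_0$ large enough that $(C/r')^M(r'/r_1)^{m_0}<1$ yields the desired uniform bound.

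For (ii), with $r_1$ and $m_0$ produced by (i), I fix any $r_2\in(r_1,r_0)$. For each integer $m\geq m_0$, the map
\begin{equation*}
\Phi_m:(t,\alpha_1,\beta_1,\ldots,\alpha_m,\beta_m)\mapsto \prod_{k=1}^m P(t,\alpha_k,\beta_k)
\end{equation*}
is continuous, hence uniformly continuous, on the compact set $K\times[0,1]^{2m}$. On the diagonal $\Delta_m=\{\alpha_k=\alpha_1,\ \beta_k=\beta_1\ \text{for all}\ k\}$, part (i) gives $\|\Phi_m\|<r_1^m$. Uniform continuity then produces $\delta=\delta(m)>0$ such that $\|\Phi_m\|<r_2^m$ throughout the $\delta$-neighborhood of $\Delta_m$, which is exactly the region described in the lemma.

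The principal difficulty is the uniformity in (i): Gelfand's formula only supplies a pointwise exponent $m(x_0)$, and the covering argument must reconcile the different exponents $m_i$ appearing on different patches. This is handled by inserting an auxiliary radius $r'$ strictly between $r^*$ and $r_1$, so that the geometric factor $(r_1/r')^m$ eventually dominates the bounded overhead $(C/r')^M$ coming from the remainder term. Once (i) is uniform, the passage to (ii) is a soft compactness-plus-continuity step, with the $m$-dependence of $\delta$ being unavoidable because the acceptable perturbation shrinks with $m$.
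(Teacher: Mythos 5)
Your proof is correct and follows essentially the same route as the paper: part (ii) is verbatim the paper's uniform-continuity argument, and part (i) fleshes out the standard upper-semicontinuity/Gelfand covering argument that the paper disposes of by citing Corollary III.13 of Hennion--Herv\'e. The only added content is your explicit reconciliation of the mismatched local exponents $m_i$ via the intermediate radius $r'$, which is precisely what the cited corollary does.
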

\begin{proof}
    \begin{enumerate}[(i)]
        \item This result follows from the upper semi-continuity of the spectral radius, as in the proof of Corollary III.13 in \cite{HennionHerve}.
        \item Let $m_0$ be defined as in (\romannumeral1) and fix arbitrary $r_2\in(r_1,r)$ and $m\geq m_0$. 
        Define
        \[P_m(t,\alpha_1,...,\alpha_m,\beta_1,...,\beta_m)=\prod_{k=1}^m P(t,\alpha_k,\beta_k).\]
        Since $P(t,\alpha,\beta)$ is continuous in $t$, $\alpha$, and $\beta$, $P_m$ is continuous on $K\times[0,1]^{2m}$, hence is uniformly continuous. Then there exists $\delta>0$, depending on $m$, such that
        \[\|P_m(t,\alpha_1,...,\alpha_1,\beta_1,...,\beta_1)-P_m(t,\alpha_1,...,\alpha_m,\beta_1,...,\beta_m)\|\leq r_2^m-r_1^m,\]
        if $|\alpha_k-\alpha_1|<\delta$ and $|\beta_k-\beta_1|<\delta$ for all $1\leq k\leq m$. Therefore, the result follows.\qedhere
    \end{enumerate}
\end{proof}

\begin{lemma}
\label{lem:exponential_convergence_away_from_0}
    For every compact set $K\subset\mathbb R^d\setminus\{0\}$, there exist constants $r_K\in(0,1)$ and $N$ large enough such that, for all $T>N$ and all $t\in K$,
    \begin{equation*}
        \left\|\prod_{k=0}^{\lfloor T\rfloor-1}Q\left(t,\frac{k}{T},\frac{k+1}{T}\right)\right\|<r_K^{\lfloor T\rfloor}.
    \end{equation*}
\end{lemma}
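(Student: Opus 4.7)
The plan is to reduce the problem to a uniform spectral-radius estimate on the diagonal $\alpha=\beta$ (which is what Assumption \hyperlink{as:non-arithmetic}{(A4)} directly provides) and then close the gap with a blocking argument in the spirit of Lemma~\ref{lem:multi-mstepnorm}(ii). The subtlety is that (A4) controls $r(Q(t,\alpha,\alpha))$ only for $\alpha=\beta$, so Lemma~\ref{lem:multi-mstepnorm}(ii) cannot be applied with $P=Q$ directly; instead I will apply Lemma~\ref{lem:multi-mstepnorm}(i) to the restriction $(t,\alpha)\mapsto Q(t,\alpha,\alpha)$ and then use uniform continuity to handle off-diagonal parameters that are still close to the diagonal.

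First, by (A4), $r(Q(t,\alpha,\alpha))<1$ for every $t\in K$ and $\alpha\in[0,1]$. Since $(t,\alpha)\mapsto Q(t,\alpha,\alpha)\in\mathcal L_{\mathcal B}$ is continuous and the spectral radius is upper semi-continuous on $\mathcal L_{\mathcal B}$, the composite map $(t,\alpha)\mapsto r(Q(t,\alpha,\alpha))$ is upper semi-continuous on the compact set $K\times[0,1]$ and hence attains its supremum there. Thus $r_0:=\sup_{t\in K,\,\alpha\in[0,1]}r(Q(t,\alpha,\alpha))<1$. Applying Lemma~\ref{lem:multi-mstepnorm}(i) to the operator-valued map $(t,\alpha,\beta)\mapsto Q(t,\alpha,\alpha)$ (which trivially depends on $\beta$), I obtain $m_0\in\mathbb N$ and $r_1\in(r_0,1)$ such that $\|Q(t,\alpha,\alpha)^m\|<r_1^m$ for all $m\geq m_0$, $t\in K$, and $\alpha\in[0,1]$.

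Fix $r_2\in(r_1,1)$ and $m\geq m_0$. The map $(t,\alpha_1,\ldots,\alpha_m,\beta_1,\ldots,\beta_m)\mapsto\prod_{k=1}^m Q(t,\alpha_k,\beta_k)$ is continuous, hence uniformly continuous on the compact domain $K\times[0,1]^{2m}$. Therefore there exists $\delta>0$ such that whenever $|\alpha_k-\alpha_1|<\delta$ and $|\beta_k-\alpha_1|<\delta$ for all $k=1,\ldots,m$, one has
\begin{equation*}
\left\|\prod_{k=1}^m Q(t,\alpha_k,\beta_k)-Q(t,\alpha_1,\alpha_1)^m\right\|<r_2^m-r_1^m,
\end{equation*}
and consequently $\bigl\|\prod_{k=1}^m Q(t,\alpha_k,\beta_k)\bigr\|<r_2^m$ uniformly for $t\in K$.

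Finally, choose $N$ so large that $(m+1)/N<\delta$, and take $T>N$. Write $\lt=qm+s$ with $0\leq s<m$, and split $\prod_{k=0}^{\lt-1}Q(t,k/T,(k+1)/T)$ into $q$ consecutive blocks of $m$ factors followed by a tail of $s$ factors. Within each block the parameters $k/T$ and $(k+1)/T$ all lie within $(m+1)/T<\delta$ of the first argument of the block, so the previous step yields norm less than $r_2^m$ for that block. Since $|e^{it\cdot(\cdot)}|=1$ gives $\|Q(t,\alpha,\beta)\|\leq 1$ for every $t,\alpha,\beta$, the tail contributes at most $1$, so the total product is bounded by $r_2^{qm}\leq r_2^{\lt-m+1}$. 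For any $r_K\in(r_2,1)$ this is less than $r_K^{\lt}$ once $\lt$ is large enough, yielding the claim after enlarging $N$. The main technical obstacle is the first step: upgrading the pointwise bound in (A4) to a uniform bound $r_0<1$ on the compact $K\times[0,1]$, which relies crucially on upper semi-continuity of the spectral radius; every subsequent step is a standard continuity-plus-blocking manipulation.
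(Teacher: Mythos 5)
Your proof is correct and follows the same blocking strategy as the paper's, but it handles a hypothesis-checking subtlety more carefully. The paper cites Lemma~\ref{lem:multi-mstepnorm}(ii) with $P=Q$ directly, which, read literally, requires $r(Q(t,\alpha,\beta))<1$ for all $(\alpha,\beta)\in[0,1]^2$, whereas Assumption~\hyperlink{as:non-arithmetic}{(A4)} controls only the diagonal $\alpha=\beta$. You notice this and instead first deduce from (A4), upper semi-continuity of the spectral radius, and compactness of $K\times[0,1]$ a uniform bound $r_0=\sup_{t\in K,\alpha}r(Q(t,\alpha,\alpha))<1$, then apply Lemma~\ref{lem:multi-mstepnorm}(i) to the diagonal family to obtain $\|Q(t,\alpha,\alpha)^m\|<r_1^m$ for $m\ge m_0$, and finally use uniform continuity of the $m$-fold product to compare $\prod_k Q(t,\alpha_k,\beta_k)$ against $Q(t,\alpha_1,\alpha_1)^m$ when all $\alpha_k$ and $\beta_k$ lie within $\delta$ of $\alpha_1$. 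That is precisely the comparison to the diagonal which (A4) licenses; the paper's shortcut is harmless in the end because in the application $\beta_k-\alpha_k=1/T\to 0$, so each block does sit near the diagonal, but your version makes this explicit rather than implicit in the invocation. Your concluding step ($\lt=qm+s$, tail bounded by $1$ using $\|Q(t,\alpha,\beta)\|\le 1$, then pick $r_K\in(r_2,1)$ and enlarge $N$ so that $r_2^{\lt-m+1}<r_K^{\lt}$) is the same calculation as the paper's choice of $l_0$ with $(r_2/r_K)^{l_0}<r_K$ and $N>3m(l_0\vee 1/\delta)$.
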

\noindent\textit{Proof.}
    By Lemma~\ref{lem:multi-mstepnorm}.(\romannumeral2) and Assumption \hyperlink{as:non-arithmetic}{(A4)}, there exist $r_2<1$, $m\in\mathbb N$ large enough, and $\delta>0$ such that
    $$\left\|\prod_{k=1}^m Q(t,\alpha_k,\beta_k)\right\|<r_2^m$$
    holds if $|\alpha_k-\alpha_1|<\delta$ and $|\beta_k-\beta_1|<\delta$ for all $1\leq k\leq m$. 
    Now fix arbitrary $r_K\in(r_2,1)$. 
    Choose $l_0\in\mathbb N$ such that $(r_2/r_K)^{l_0}<r_K$ and choose $N>3m(l_0\vee1/\delta)$. Let $\lt=\tilde lm+j$ with $0\leq j<m$. Then we have $\tilde l\geq l_0$ and 
    \begin{align*}
        \left\|\prod_{k=0}^{\lt-1} Q\left(t,\frac{k}{T},\frac{k+1}{T}\right)\right\|&\leq\left\|\prod_{l=1}^{\tilde l}\prod_{k=(l-1)m}^{lm-1} Q\left(t,\frac{k}{T},\frac{k+1}{T}\right)\right\|\cdot\left\|\prod_{k=\Tilde{l}m}^{\lt-1}Q\left(t,\frac{k}{T},\frac{k+1}{T}\right)\right\|\\
        &\leq\prod_{l=1}^{\tilde l}\left\|\prod_{k=(l-1)m}^{lm-1} Q\left(t,\frac{k}{T},\frac{k+1}{T}\right)\right\|\leq r_2^{\tilde lm}<r_K^{\tilde lm}\cdot r_K^m<r_K^{\lt}.\tag*{\qed}
    \end{align*}

\subsection{Product of the top eigenvalues}
In this section, we obtain the asymptotic value of the product of the top eigenvalues by the Taylor expansion near $0$. The first result concerns the first and second derivatives of $\lambda(t,\alpha,\beta)$ w.r.t. $t$ at $0$.
\begin{lemma}
\label{lem:asymptotics}
\begin{enumerate}[(i)]
    \item For all $\alpha,\beta\in[0,1]$, we have $\nabla_t\lambda(0,\alpha,\beta)=0$.
    \item For all $\alpha\in[0,1]$, $\nabla_t^2\lambda(0,\alpha,\alpha)$ is real and negative-definite. Moreover,
    \begin{equation}
    \label{eq:second_derivative}
        \nabla_t^2\lambda(0,\alpha,\alpha)=-\lim_{T\to\infty}\frac{1}{T}\E_\nu({S_T^{\alpha}}{S_T^{\alpha}}^*),
    \end{equation}
    where $S_T^{\alpha}=\int_0^T b(\alpha,X_{s})ds$. 
\end{enumerate}
\end{lemma}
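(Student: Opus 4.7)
The plan is to combine the eigenvalue equation $Q(t,\alpha,\beta)v(t,\alpha,\beta)=\lambda(t,\alpha,\beta)v(t,\alpha,\beta)$ from Theorem~\ref{thm:multi-perturbation} with the probabilistic identity $Q(t,\alpha,\alpha)^n 1(x)=\E_x[e^{it\cdot S_n^\alpha}]$, valid by the Markov property and time-homogeneity of $\{X_s\}_{s\geq0}$. For (i), differentiate the eigenvalue identity in $t$ at $t=0$, where $v(0,\cdot,\cdot)=1$ and $\lambda(0,\cdot,\cdot)=1$, and pair against the left $Q$-eigenvector $\nu$. Since $\langle\nu,Qh\rangle=\langle\nu,h\rangle$, the $\nabla_t v$ contributions cancel and we obtain $\nabla_t\lambda(0,\alpha,\beta)=\langle\nu,\nabla_t Q(0,\alpha,\beta)\cdot 1\rangle$. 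Differentiating inside \eqref{def:Q(t,alpha,beta)} gives $\nabla_t Q(0,\alpha,\beta)\cdot 1(x)=i\int_0^1 P_s b(\alpha+s(\beta-\alpha),\cdot)(x)\,ds$; pairing with $\nu$ and using $\nu P_s=\nu$ reduces this to $i\int_0^1\langle\nu,b(\alpha+s(\beta-\alpha),\cdot)\rangle\,ds$, which vanishes by \hyperlink{as:zero_mean}{(A3)}.

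For the formula in (ii), the orthogonality identities $(v\otimes\varphi)^2=v\otimes\varphi$ and $(v\otimes\varphi)N=N(v\otimes\varphi)=0$ (which hold because $F$ and $H$ are $Q(t,\alpha,\alpha)$-invariant and $\Pi_H v=0$) yield
\[
Q(t,\alpha,\alpha)^n=\lambda(t,\alpha,\alpha)^n\,v(t,\alpha,\alpha)\otimes\varphi(t,\alpha,\alpha)+N(t,\alpha,\alpha)^n.
\]
Pairing $\nu$ on the left with $1$ on the right then gives $\E_\nu[e^{it\cdot S_n^\alpha}]=\lambda(t,\alpha,\alpha)^n A(t)+\langle\nu,N(t,\alpha,\alpha)^n 1\rangle$, where $A(t):=\langle\nu,v(t,\alpha,\alpha)\rangle\langle\varphi(t,\alpha,\alpha),1\rangle$ with $A(0)=1$. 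Applying $\nabla_t^2$ at $t=0$: the left-hand side equals $-\E_\nu[S_n^\alpha(S_n^\alpha)^*]$; using part (i) and $A(0)=1$, the $\lambda^n A$ term equals $n\,\nabla_t^2\lambda(0,\alpha,\alpha)+O(1)$; and the $N^n$ term is $O(1)$ uniformly in $n$ by Theorem~\ref{thm:multi-perturbation}(vi). Dividing by $n$ and sending $n\to\infty$ gives \eqref{eq:second_derivative} along integers. Passage to real $T$ is immediate because $S_T^\alpha-S_{\lfloor T\rfloor}^\alpha$ is bounded and $\E_\nu|S_{\lfloor T\rfloor}^\alpha|^2=O(T)$, making the cross and remainder terms negligible after division by $T$.

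Realness and negative-semidefiniteness of $\nabla_t^2\lambda(0,\alpha,\alpha)$ are immediate from the formula. For strict negative-definiteness, suppose $\xi^*\nabla_t^2\lambda(0,\alpha,\alpha)\xi=0$ for some $\xi\in\mathbb R^d\setminus\{0\}$; equivalently, $\sigma^2(\xi):=\lim_T T^{-1}\E_\nu[(\xi\cdot S_T^\alpha)^2]=0$. Set $g(x):=\xi\cdot\E_x[\int_0^1 b(\alpha,X_s)\,ds]$, a real-valued function in $\mathcal B$ with $\langle\nu,g\rangle=0$ by \hyperlink{as:zero_mean}{(A3)}. The spectral gap in \eqref{eq:multi-decomposition} makes $(I-Q)$ invertible on $\mathcal B\cap H$, so $u:=(I-Q)^{-1}g\in\mathcal B$ is well-defined and real-valued. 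A standard Gordin--Lifshits decomposition yields
\[
\xi\cdot S_n^\alpha=u(X_0)-u(X_n)+M_n,
\]
with $M_n$ a square-integrable martingale having stationary increments under $\Prob_\nu$ and $\E[M_1^2]=\sigma^2(\xi)$. Vanishing of $\sigma^2(\xi)$ forces each martingale increment to be zero $\nu$-a.s., producing the coboundary $\xi\cdot\int_0^1 b(\alpha,X_s)\,ds=u(X_0)-u(X_1)$ $\nu$-a.s. Setting $\tilde u:=u\xi/|\xi|^2$, a bounded measurable $\bm X\to\mathbb R^d$ function, this is equivalent to $\int_0^1 b(\alpha,X_s)\,ds+\tilde u(X_1)-\tilde u(X_0)\in\xi^\perp$ $\nu$-a.s., violating \hyperlink{as:non-lattice}{(NL)} with $a=0$ and $H=\xi^\perp\subsetneq\mathbb R^d$, and thus violating \hyperlink{as:non-arithmetic}{(A4)} by the equivalence in the Remark following Theorem~\ref{thm:main_result}. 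The main technical obstacle is precisely this Gordin step---extracting a bounded coboundary from a vanishing asymptotic variance---while the remaining steps are routine manipulations of the spectral decomposition together with the uniform bound of Theorem~\ref{thm:multi-perturbation}(vi).
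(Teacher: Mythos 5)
Your proof is correct, and part (i) matches the paper's argument exactly. For part (ii), however, you take a genuinely different route to strict negative-definiteness. The paper first differentiates the eigenvalue relation $Q(t,\alpha,\alpha)\tilde v(t,\alpha,\alpha)=\lambda(t,\alpha,\alpha)\tilde v(t,\alpha,\alpha)$ twice and pairs with $\nu$, which (after extracting the corrector $u(\alpha)$ from the identity $\nabla_t\tilde v(0,\alpha,\alpha)=iu(\alpha)$) yields the closed form
\[
\nabla_t^2\lambda(0,\alpha,\alpha)=-\E_\nu\Bigl[\bigl(\textstyle\int_0^1 b(\alpha,X_s)\,ds+u(\alpha)(X_1)-u(\alpha)(X_0)\bigr)\bigl(\cdots\bigr)^*\Bigr].
\]
Negative semi-definiteness is immediate from this, and for strict definiteness the paper constructs $f=\exp\bigl(it\cdot u(\alpha)\bigr)$ from the putative degenerate direction and derives $Q(t,\alpha,\alpha)f=f$ $\nu$-a.s., contradicting $r(Q(t,\alpha,\alpha))<1$ directly; formula \eqref{eq:second_derivative} is established afterwards, via $\nabla_t^2\bigl(Q^m(t,\alpha,\alpha)\bigr)$ and boundedness of $\nabla_t^2(N^m)$ exactly as you do. You instead prove \eqref{eq:second_derivative} first and then extract the coboundary via a Gordin--Lifshits decomposition, with the same corrector $u=(I-Q)^{-1}g$, before invoking the (NL)/(A4) equivalence from the Remark. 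Your martingale computation is right (note $\Delta_1=\xi\cdot\int_0^1b(\alpha,X_s)\,ds+u(X_1)-u(X_0)$, and $\sigma^2(\xi)=\E_\nu[\Delta_1^2]$ once the $O(1)$ boundary and $O(\sqrt{n})$ cross terms are discarded), and the step from the scalar coboundary to $\xi^\perp$ is fine. The trade-off: the paper's route is self-contained within the perturbative framework and avoids appealing to the Remark (and hence to the Appendix), while yours is arguably more probabilistic in flavor and avoids the explicit second-order perturbation computation for $\nabla_t^2\lambda$. If you want to avoid the appeal to the Remark, you could close your argument as the paper does, by observing that the coboundary you derived yields $Q(\xi,\alpha,\alpha)e^{iu(\cdot)}=e^{iu(\cdot)}$ $\nu$-a.s., which forces $r(Q(\xi,\alpha,\alpha))\geq1$, contradicting (A4) directly.
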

\begin{proof}
        \textit{(i).}With the same notation as in Theorem~\ref{thm:multi-perturbation}, we have the decomposition
    \begin{equation*}
        Q(t,\alpha,\beta)=\lambda(t,\alpha,\beta)\Tilde v(t,\alpha,\beta)\otimes\Tilde\varphi(t,\alpha,\beta)+N(t,\alpha,\beta),
    \end{equation*}
    for $t\in I$ and $\alpha,\beta\in[0,1]$,
    where \[\Tilde v(t,\alpha,\beta)=\frac{v(t,\alpha,\beta)}{\langle\nu,v(t,\alpha,\beta)\rangle}\quad\text{and}\quad\Tilde\varphi(t,\alpha,\beta)=\langle\nu,v(t,\alpha,\beta)\rangle\varphi(t,\alpha,\beta).\]
    Compared to the previous decomposition, we no longer have $\|\tilde v(t,\alpha,\beta)\|\equiv1$, but instead have $\langle\Tilde\varphi(0,\alpha,\beta),\Tilde v(t,\alpha,\beta)\rangle\equiv1$, and still have $Q(t,\alpha,\beta)\Tilde v(t,\alpha,\beta)=\lambda(t,\alpha,\beta)\Tilde v(t,\alpha,\beta)$. 
    Differentiate the equations with respect to $t$. Then
    \begin{align}
        &\langle\Tilde\varphi(0,\alpha,\beta),\nabla_t\Tilde v(t,\alpha,\beta)\rangle\equiv0,\nonumber\\
        \nabla_tQ(t,\alpha,\beta)\Tilde v(t,\alpha,\beta)&+Q(t,\alpha,\beta)\nabla_t\Tilde v(t,\alpha,\beta)\nonumber\\
        &=\nabla_t\lambda(t,\alpha,\beta)\Tilde v(t,\alpha,\beta)+\lambda(t,\alpha,\beta)\nabla_t\Tilde v(t,\alpha,\beta).\label{eq:multi-firstderivative}
    \end{align}
    Note that $\tilde v(0,\alpha,\beta)=1$, $\Tilde\varphi(0,\alpha,\beta)=\nu$, $\lambda(0,\alpha,\beta)=1$, and $Q(0,\alpha,\beta)=Q$.
    Applying $\nu$ to \eqref{eq:multi-firstderivative} at $t=0$, we obtain, by Assumption \hyperlink{as:zero_mean}{(A3)},
    \begin{equation}
        \label{eq:first_derivative}
        \nabla_t\lambda(0,\alpha,\beta)=\langle\nu,\nabla_tQ(0,\alpha,\beta)1\rangle=i\E_\nu\left(\int_0^1 b(\alpha+s(\beta-\alpha),X_s)ds\right)=0.
    \end{equation}

    \textit{(ii).}Taking \eqref{eq:first_derivative} into account, for $t=0$ and $\alpha=\beta$, \eqref{eq:multi-firstderivative} reduces to
    \begin{equation}
    \label{eq:for_variance_1}
        (1-Q)\nabla_t\Tilde v(0,\alpha,\alpha)=\nabla_tQ(0,\alpha,\alpha)1=i\E_x\left(\int_0^1 b(\alpha,X_s)ds\right).
    \end{equation}
    Since both $\nabla_t\Tilde v(0,\alpha,\alpha)$ and $i\E_x(\int_0^1 b(\alpha,X_s)ds)$ have zero mean w.r.t $\nu$, they are in the closed subspace $H$. 
    Also recall that $1$ is not in the spectrum of $Q_{|H}$. Therefore, $\nabla_t\Tilde v(0,\alpha,\alpha)$ can be viewed as a solution to \eqref{eq:for_variance_1}, and thus $\nabla_t\Tilde v(0,\alpha,\alpha)$ is purely imaginary, i.e.,
    \begin{equation}
    \label{eq:for_variance_3}
        \nabla_t\Tilde v(0,\alpha,\alpha)=iu(\alpha)
    \end{equation} for a certain real vector-valued bounded function $u(\alpha)$ that depends on the parameter $\alpha$.
    Take second derivatives on both sides of the equality $Q(t,\alpha,\alpha)\Tilde v(t,\alpha,\alpha)=\lambda(t,\alpha,\alpha)\Tilde v(t,\alpha,\alpha)$, apply $\nu$ to the result at $t=0$, and obtain the following equality of matrices (here $*$ stands for transposition without taking the complex conjugate):
    \begin{equation}
    \begin{aligned}
        \label{eq:second_der_lambda}\nabla_t^2\lambda(0,\alpha,\alpha)&=\langle\nu,\nabla_t^2Q(0,\alpha,\alpha)1+\nabla_tQ(0,\alpha,\alpha)\nabla_t\Tilde v(0,\alpha,\alpha)^*\\
        &\quad+\left(\nabla_tQ(0,\alpha,\alpha)\nabla_t\Tilde v(0,\alpha,\alpha)^*\right)^*\rangle.
    \end{aligned}
    \end{equation}
    Let us compute the terms on the right hand side by recalling the definition in \eqref{def:Q(t,alpha,beta)} and \eqref{eq:for_variance_3}:
    \begin{equation}
    \label{eq:der_Q}
    \begin{aligned}
        \langle\nu,\nabla_t^2Q(0,\alpha,\alpha)1\rangle&=-\E_{\nu}\left[\left(\int_0^1 b(\alpha,X_s)ds\right)\left(\int_0^1 b(\alpha,X_s)ds\right)^*\right],\\
        \langle\nu,\nabla_tQ(0,\alpha,\alpha)\nabla_t\Tilde v(0,\alpha,\alpha)^*\rangle&=-\E_\nu\left[\left(\int_0^1 b(\alpha,X_s)ds\right)u(\alpha)(X_1)^*\right].
        \end{aligned}
    \end{equation}
    Also notice that, by \eqref{eq:for_variance_1} and \eqref{eq:for_variance_3},
    \begin{equation}
    \label{eq:revised1}
        \begin{aligned}
            \E_\nu\left[\E_x\left(\int_0^1 b(\alpha,X_s)ds+u(\alpha)(X_1)\right)u(\alpha)(x)^*\right]&=\E_\nu\left[\left((1-Q)u(\alpha)(x)+\E_xu(\alpha)(X_1)\right)u(\alpha)(x)^*\right]\\
            &=\E_\nu [u(\alpha)(x)u(\alpha)(x)^*].
        \end{aligned}
    \end{equation}
    Then, using \eqref{eq:second_der_lambda} and \eqref{eq:der_Q} for the first equality below and  \eqref{eq:revised1} for the second equality below, we obtain that
    \begin{align*}
        &\nabla_t^2\lambda(0,\alpha,\alpha)\\
        &=-\E_\nu\left[\left(\int_0^1 b(\alpha,X_s)ds+u(\alpha)(X_1)\right)\left(\int_0^1 b(\alpha,X_s)ds+u(\alpha)(X_1)\right)^*\right]+\E_\nu[ u(\alpha)(x)u(\alpha)(x)^*]\\
        &=-\E_\nu\left[\left(\int_0^1 b(\alpha,X_s)ds+u(\alpha)(X_1)-u(\alpha)(x)\right)\left(\int_0^1 b(\alpha,X_s)ds+u(\alpha)(X_1)-u(\alpha)(x)\right)^*\right].
    \end{align*}
    Hence $\nabla_t^2\lambda(0,\alpha,\alpha)$ is negative semi-definite. To see that it is negative definite, let us assume that there exists a non-zero vector $t\in\mathbb R^d$ such that $t^*\nabla_t^2\lambda(0,\alpha,\alpha)t=0$. Then we have $\nu$-a.s. that
    \begin{equation*}
        \E_x\left[t\cdot\left(\int_0^1 b(\alpha,X_s)ds+u(\alpha)(X_1)-u(\alpha)(x)\right)\right]^2=0,
    \end{equation*}
    which implies that $t\cdot\left(\int_0^1 b(\alpha,X_s)ds+u(\alpha)(X_1)-u(\alpha)(x)\right)=0$, $\nu$-a.s. However, this contradicts Assumption \hyperlink{as:non-arithmetic}{(A4)}. Indeed, let $f(x)=\exp(it\cdot u(\alpha)(x))$. Then we have 
    \begin{equation*}
        Q(t,\alpha,\alpha)f(x)=\E_x\left[\exp\left(it\cdot\left(\int_0^1 b(\alpha,X_s)ds+u(\alpha)(X_1)-u(\alpha)(x)\right)\right)\right]f(x).
    \end{equation*}
    Then $Q(t,\alpha,\alpha)f(x)=f(x)$, $\nu$-a.s., which contradicts the fact that $r(Q(t,\alpha,\alpha))<1$.

    To prove \eqref{eq:second_derivative}, we write, for $m\in\mathbb N$, 
    \begin{equation*}
        Q(t,\alpha,\alpha)^m=\lambda(t,\alpha,\alpha)^mv(t,\alpha,\alpha)\otimes\varphi(t,\alpha,\alpha)+N^m(t,\alpha,\alpha).
    \end{equation*}
    Taking the second derivatives, we obtain
    \begin{equation*}
    \begin{aligned}
        &\nabla_t^2\left(Q(t,\alpha,\alpha)^m\right)\\
        &=\nabla_t^2(\lambda(t,\alpha,\alpha)^m)v(t,\alpha,\alpha)\otimes\varphi(t,\alpha,\alpha)+\nabla_t(v(t,\alpha,\alpha)\otimes\varphi(t,\alpha,\alpha))\nabla_t(\lambda(t,\alpha,\alpha)^m)^*\\
        &\quad +\nabla_t(\lambda(t,\alpha,\alpha)^m)\nabla_t(v(t,\alpha,\alpha)\otimes\varphi(t,\alpha,\alpha))^*+R_m(t,\alpha,\alpha),
    \end{aligned}
    \end{equation*}
    where $R_m(t,\alpha,\alpha)=\lambda(t,\alpha,\alpha)^m\nabla_t^2(v(t,\alpha,\alpha)\otimes\varphi(t,\alpha,\alpha))+\nabla_t^2(N^m(t,\alpha,\alpha))$. Note that 
    \begin{equation*}
    \begin{aligned}
        \nabla_t^2(\lambda(t,\alpha,\alpha)^m)&=m(m-1)\lambda(t,\alpha,\alpha)^{m-2}\nabla_t\lambda(t,\alpha,\alpha)\nabla_t\lambda(t,\alpha,\alpha)^*\\
        &\quad+m\lambda(t,\alpha,\alpha)^{m-1}\nabla_t^2\lambda(t,\alpha,\alpha).
    \end{aligned}
    \end{equation*}
    Then it follows that $\nabla_t^2(\lambda(t,\alpha,\alpha)^m)(0,\alpha,\alpha)=m\nabla_t^2\lambda(0,\alpha,\alpha)$.
    By Theorem~\ref{thm:multi-perturbation}, $R_m(0,\alpha,\alpha)$ is uniformly bounded for all $m$. Thus, for all $\mu\in\mathcal B_p'$,
    \begin{equation*}
        \sup_{m\geq 1}|\langle\mu,m\nabla_t^2\lambda(0,\alpha,\alpha)\cdot 1-\nabla_t^2(Q^m(0,\alpha,\alpha))\cdot 1\rangle|=\sup_{m\geq1}|\langle \mu,R_m(0,\alpha,\alpha)\cdot 1\rangle|<\infty.
    \end{equation*}
    Therefore, for all initial distribution $\mu\in\mathcal B'_p$,
    \begin{equation*}
        \begin{aligned}
            \nabla_t^2\lambda(0,\alpha,\alpha)&=\lim_{m\to\infty}\frac{1}{m}\langle\mu,\nabla_t^2(Q^m(t,\alpha,\alpha))(0,\alpha,\alpha)\cdot 1\rangle\\
            &=-\lim_{m\to\infty}\frac{1}{m}\E_\mu({S_m^{\alpha}}{S_m^{\alpha}}^*),
        \end{aligned}
    \end{equation*}
    which implies \eqref{eq:second_derivative}.
\end{proof}
We close this section by computing the product of the eigenvalues.
\begin{lemma}
    \label{lem:product_of_eigenvalues}
    For $\tau\in\mathbb R^d$,
    \begin{equation*}\lim_{T\to\infty}\prod_{k=0}^{\lfloor T\rfloor-1}\lambda\left(\frac{\tau}{\sqrt{T}},\frac{k}{T},\frac{k+1}{T}\right)=\exp\left(-\frac{1}{2}\tau^*\Sigma\Sigma^*\tau\right),
    \end{equation*}where $\Sigma\Sigma^*:=-\int_0^1\nabla_t^2\lambda(0,\alpha,\alpha)d\alpha$ is positive-definite.
\end{lemma}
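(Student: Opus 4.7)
The plan is to take logarithms of the product and carry out a uniform second-order Taylor expansion of $\lambda(t,\alpha,\beta)$ in the variable $t$ around $0$. By \lemref{lem:asymptotics}(i), $\lambda(0,\alpha,\beta)=1$ and $\nabla_t\lambda(0,\alpha,\beta)=0$ for all $(\alpha,\beta)\in[0,1]^2$, and by \thmref{thm:multi-perturbation}(i), $\lambda\in\mathcal C^2(I\times[0,1]^2,\mathbb C)$. Since the Hessian $\nabla_t^2\lambda$ is jointly continuous and hence uniformly continuous on a compact neighborhood of $\{0\}\times[0,1]^2$, this yields an expansion
\[
\lambda(t,\alpha,\beta)=1+\tfrac{1}{2}t^*\nabla_t^2\lambda(0,\alpha,\beta)\,t+R(t,\alpha,\beta),
\]
with $|R(t,\alpha,\beta)|=o(|t|^2)$ uniformly in $(\alpha,\beta)\in[0,1]^2$ as $|t|\to 0$.

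Substituting $t=\tau/\sqrt T$ and taking $T$ large, each factor lies in a neighborhood of $1$ where the principal branch of $\log$ is analytic, so
\[
\log\lambda\left(\frac{\tau}{\sqrt T},\alpha,\beta\right)=\frac{1}{2T}\tau^*\nabla_t^2\lambda(0,\alpha,\beta)\,\tau+o(1/T),
\]
uniformly in $(\alpha,\beta)$. Summing over $k=0,\dots,\lfloor T\rfloor-1$ with $(\alpha,\beta)=(k/T,(k+1)/T)$, the uniform $o(1/T)$ remainders aggregate to $o(1)$ because $\lfloor T\rfloor/T\to 1$. In the main term, uniform continuity of $\nabla_t^2\lambda(0,\cdot,\cdot)$ on $[0,1]^2$ permits the replacement of $\nabla_t^2\lambda(0,k/T,(k+1)/T)$ by $\nabla_t^2\lambda(0,k/T,k/T)$ at an extra cost of $o(1)$ in the average. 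The resulting Riemann sum $\frac{1}{T}\sum_{k=0}^{\lfloor T\rfloor-1}\nabla_t^2\lambda(0,k/T,k/T)$ converges to $\int_0^1\nabla_t^2\lambda(0,\alpha,\alpha)\,d\alpha$ by continuity of the integrand, so
\[
\lim_{T\to\infty}\sum_{k=0}^{\lfloor T\rfloor-1}\log\lambda\!\left(\frac{\tau}{\sqrt T},\frac{k}{T},\frac{k+1}{T}\right)=\tfrac{1}{2}\tau^*\!\int_0^1\nabla_t^2\lambda(0,\alpha,\alpha)\,d\alpha\,\tau=-\tfrac{1}{2}\tau^*\Sigma\Sigma^*\tau,
\]
and exponentiating yields the claimed limit.

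For positive-definiteness of $\Sigma\Sigma^*$, I invoke \lemref{lem:asymptotics}(ii): for every $\alpha\in[0,1]$ the matrix $-\nabla_t^2\lambda(0,\alpha,\alpha)$ is positive-definite, and by continuity in $\alpha$ on the compact interval $[0,1]$ the integral $-\int_0^1\nabla_t^2\lambda(0,\alpha,\alpha)\,d\alpha$ inherits strict positive-definiteness. The main technical point in the argument above is securing the Taylor remainder $R(t,\alpha,\beta)=o(|t|^2)$ \emph{uniformly} in $(\alpha,\beta)$, which rests on the compactness of $\{0\}\times[0,1]^2$ and the joint continuity of $\nabla_t^2\lambda$ guaranteed by the perturbation theorem; the rest is a standard combination of Riemann-sum convergence and exponentiation of uniformly small logarithmic increments.
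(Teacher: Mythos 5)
Your proof is correct and takes essentially the same approach as the paper: a second-order Taylor expansion of $\lambda$ in $t$ at the origin (using $\lambda(0,\alpha,\beta)=1$, $\nabla_t\lambda(0,\alpha,\beta)=0$, and the uniform continuity of $\nabla_t^2\lambda$), followed by a Riemann-sum convergence to $\int_0^1\nabla_t^2\lambda(0,\alpha,\alpha)\,d\alpha$ and the positive-definiteness argument via \lemref{lem:asymptotics}(ii). The only cosmetic difference is that you pass to logarithms explicitly, whereas the paper writes the product as $\prod_k\bigl(1+\tfrac{1}{2T}\tau^*\nabla_t^2\lambda(0,k/T,(k+1)/T)\tau+o(1/T)\bigr)$ and exponentiates directly; both rely on exactly the same uniform $o(1/T)$ estimate coming from compactness of $[0,1]^2$ and joint continuity of the Hessian, which you correctly identify as the key technical point.
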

\noindent\textit{Proof.}
     Since $\lambda(t,\alpha,\beta)\in\mathcal C^2(\mathbb R^d\times[0,1]^2,\mathbb C)$, $\nabla^2_t\lambda(0,\alpha,\beta)$ is continuous on $[0,1]^2$ and hence uniformly continuous. Therefore,
\begin{equation*}
\label{eq:multi-prod_of_lambda}
    \begin{aligned}
        \lim_{T\to\infty}\frac{1}{T}\sum_{k=0}^{\lfloor T\rfloor-1}\nabla_t^2\lambda\left(0,\frac{k}{T},\frac{k+1}{T}\right)
        &=-\lim_{T\to\infty}\frac{1}{T}\sum_{k=0}^{\lfloor T\rfloor-1}\nabla_t^2\lambda\left(0,\frac{k}{T},\frac{k}{T}\right)\\
        &=-\int_0^1\nabla^2_t\lambda(0,\alpha,\alpha)d\alpha\\
        &=-\Sigma\Sigma^*.
    \end{aligned}
\end{equation*}Since, for each $\alpha\in[0,1]$, $\nabla^2_t\lambda(0,\alpha,\alpha)$ is negative-definite, $\Sigma\Sigma^*$ is positive-definite. Finally, the product of the top eigenvalues at $t={\tau}/{\sqrt{T}}$ is computed:
    \begin{align*}
        \lim_{T\to\infty}\prod_{k=0}^{\lfloor T\rfloor-1}\lambda\left(\frac{\tau}{\sqrt{T}},\frac{k}{T},\frac{k+1}{T}\right)&=\lim_{T\to\infty}\prod_{k=0}^{\lfloor T\rfloor-1}\left(1+\frac{1}{2T}\tau^*\nabla^2_t\lambda(0,\frac{k}{T},\frac{k+1}{T})\tau+o\left(\frac{1}{T}\right)\right)\\
        &=\exp\left(\lim_{T\to\infty}\frac{1}{2T}\sum_{k=0}^{\lfloor T\rfloor-1}\tau^*\nabla_t^2\lambda\left(0,\frac{k}{T},\frac{k+1}{T}\right)\tau\right)\\
        &=\exp\left(-\frac{1}{2}\tau^*\Sigma\Sigma^*\tau\right).\tag*{\qed}
    \end{align*}
\section{Proof of the main result}
\label{sec:proof}
In this section, we prove the main result - Theorem~\ref{thm:main_result}. 
Let us first make some simple computations needed for the proof of the next lemma. Recall that $I$ is the neighborhood of the origin defined in Theorem~\ref{thm:multi-perturbation}.
\begin{enumerate}[(i)]
    \item Since $v(t,\alpha,\beta)\in\mathcal C^2(I\times[0,1]^2,\mathcal B)$, $\nabla_t^2v(0,\alpha,\beta)$ is uniformly continuous on $I\times[0,1]^2$. Thus, using the Taylor expansion for the first variable near zero, we have, for each $\tau\in\mathbb R^d$, all $0\leq k\leq\lt-2$, and all $T$ sufficiently large,
    \begin{equation}
        \label{eq:product_of_varphi_v}
    \begin{aligned}
        &v\left(\frac{\tau}{\sqrt T},\frac{k+1}{T},\frac{k+2}{T}\right)-v\left(\frac{\tau}{\sqrt T},\frac{k}{T},\frac{k+1}{T}\right)\\
        =&\ 1+\frac{\tau}{\sqrt{T}}\cdot\nabla_tv\left(0,\frac{k+1}{T},\frac{k+2}{T}\right)+\frac{1}{2T}\tau^*\nabla_t^2v\left(0,\frac{k+1}{T},\frac{k+2}{T}\right)\tau+o\left(\frac{1}{T}\right)\\
        &\ -1-\frac{\tau}{\sqrt{T}}\cdot\nabla_tv\left(0,\frac{k}{T},\frac{k+1}{T}\right)-\frac{1}{2T}\tau^*\nabla_t^2v\left(0,\frac{k}{T},\frac{k+1}{T}\right)\tau+o\left(\frac{1}{T}\right)\\
        =&\ o\left(\frac{1}{T}\right).
    \end{aligned}
    \end{equation}
    \item Since $\varphi(t,\alpha,\beta)$ is bounded on $I\times[0,1]^2$, for each $\tau\in\mathbb R^d$ and all $0\leq k\leq\lfloor T\rfloor-2$, we have by part 
(\romannumeral 3) in Theorem~\ref{thm:multi-perturbation} and \eqref{eq:product_of_varphi_v},
    \[
    \left\langle\varphi\left(\frac{\tau}{\sqrt T},\frac{k}{T},\frac{k+1}{T}\right),v\left(\frac{\tau}{\sqrt T},\frac{k+1}{T},\frac{k+2}{T}\right)\right\rangle=1+o\left(\frac{1}{T}\right).\] Therefore, for each $\tau\in\mathbb R^d$, uniformly in $0\leq i\leq\lfloor T\rfloor-2$,  
    \begin{equation}
        \label{eq:product_of_varphi_v_2}
    \lim_{T\to\infty}\prod_{k=i}^{\lfloor T\rfloor-2}\left\langle \varphi\left(\frac{\tau}{\sqrt T},\frac{k}{T},\frac{k+1}{T}\right),v\left(\frac{\tau}{\sqrt T},\frac{k+1}{T},\frac{k+2}{T}\right)\right\rangle=1.
    \end{equation}
    Moreover, by arguments similar to those leading to \eqref{eq:product_of_varphi_v_2}, the product in \eqref{eq:product_of_varphi_v_2} is uniformly bounded in $\tau/\sqrt{T}\in I$.
\item 
We showed in Lemma~\ref{lem:asymptotics} that $\nabla_t^2\lambda(0,\alpha,\alpha)$ is negative-definite for all $\alpha\in[0,1]$.
The absolute values of the eigenvalues of $\nabla_t^2\lambda(0,\alpha,\alpha)$ are bounded from below by a certain $c>0$ for all $\alpha\in[0,1]$. 
Then, by the continuity of $\nabla_t^2\lambda(0,\alpha,\beta)$, by making $I$ smaller (if needed), we can make sure that for all $T$ sufficiently large, all $\tau/\sqrt{T}\in I$, and all $0\leq k\leq\lt-2$,
\begin{align*}
    \lambda\left(\frac{\tau}{\sqrt{T}},\frac{k}{T},\frac{k+1}{T}\right)\leq 1-\frac{c|\tau|^2}{3T}\leq\exp\left(-\frac{c|\tau|^2}{3T}\right).
\end{align*}
Then, for all $\tau/\sqrt{T}\in I$ and $T$ sufficiently large,
\begin{equation}
\label{eq:dct_lambda}
    \prod_{k=0}^{\lfloor T\rfloor-1}\lambda\left(\frac{\tau}{\sqrt{T}},\frac{k}{T},\frac{k+1}{T}\right)\leq \exp\left(-\frac{c|\tau|^2}{4}\right).
\end{equation}
Combining this with the last statement in part (\romannumeral2), we see that there exists $C_2>0$ such that for all $\tau/\sqrt{T}\in I$, $0\leq k\leq\lt-2$, and $T$ sufficiently large,
\begin{equation}
\label{eq:dct}
    \prod_{k=0}^{\lfloor T\rfloor-1}\lambda\left(\frac{\tau}{\sqrt{T}},\frac{k}{T},\frac{k+1}{T}\right)\prod_{k=0}^{\lfloor T\rfloor-2}\left\langle \varphi\left(\frac{\tau}{\sqrt{T}},\frac{k}{T},\frac{k+1}{T}\right),v\left(\frac{\tau}{\sqrt{T}},\frac{k+1}{T},\frac{k+2}{T}\right)\right\rangle\leq C_2\exp\left(-\frac{c|\tau|^2}{4}\right).
\end{equation}
\end{enumerate}

Define two (possibly complex) measures on $\mathbb R^d$ for $u\in\mathbb R^d$, $f\in\mathcal B$, and $\mu\in\mathcal B_p'$:
\begin{equation}
    \label{def:measures}
    \begin{aligned}
        m_T^{u,f,\mu}(B)&={\mathrm{det}(\Sigma)}(2\pi T)^{d/2}\E_\mu[f(X_{T})1_B(S_T-u)],\\
        \Tilde m_T^{u,f}(B)&=e^{-\frac{1}{2T}u^*(\Sigma\Sigma^*)^{-1}u}\langle\nu,f\rangle \mathfrak L(B),
    \end{aligned}
\end{equation}
    where $\Sigma$ is defined in Lemma~\ref{lem:product_of_eigenvalues} and $\mathfrak L$ is the Lebesgue measure. We need to prove that they are sufficiently close as $T$ tends to $\infty$. 
    The following lemma is the main step towards this goal. 

\begin{lemma}
\label{lem:main_proof}
    For each $h$ that is a continuous integrable function on $\mathbb R^d$ with compactly supported Fourier transform and each constant $C>0$, we have, uniformly in $u\in\mathbb R^d$, $f\in\mathcal B_C$, and initial distribution $\mu\in\mathcal B'_p$,
    \[|\langle m_T^{u,f,\mu},h\rangle-\langle\Tilde m_T^{u,f},h\rangle|\to0 \text{, as $T\to\infty$}.\]
\end{lemma}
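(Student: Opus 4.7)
The plan is to pass to the Fourier side. Let $\hat h(t)=\int_{\mathbb R^d} h(x) e^{-it\cdot x}\,dx$, whose support is contained in $\{|t|\leq R\}$ for some $R>0$, so that $h(y)=(2\pi)^{-d}\int\hat h(t) e^{it\cdot y}\,dt$. By Fubini and the substitution $t=\tau/\sqrt T$, definition \eqref{def:measures} and \lemref{lem:Nagaev} give
\begin{equation*}
    \langle m_T^{u,f,\mu},h\rangle = \frac{\det(\Sigma)}{(2\pi)^{d/2}}\int \hat h(\tau/\sqrt T)\, e^{-i\tau\cdot u/\sqrt T}\,\E_\mu\!\left[f(X_T)e^{i\tau\cdot S_T/\sqrt T}\right]d\tau.
\end{equation*}
With the convention $\det(\Sigma)>0$, the Gaussian Fourier identity $\int e^{-i\tau\cdot u/\sqrt T}e^{-\tau^*\Sigma\Sigma^*\tau/2}\,d\tau=(2\pi)^{d/2}\det(\Sigma)^{-1}\exp(-u^*(\Sigma\Sigma^*)^{-1}u/(2T))$ allows us to rewrite
\begin{equation*}
    \langle\tilde m_T^{u,f},h\rangle = \frac{\det(\Sigma)}{(2\pi)^{d/2}}\int e^{-i\tau\cdot u/\sqrt T}\hat h(0)\langle\nu,f\rangle e^{-\tau^*\Sigma\Sigma^*\tau/2}\,d\tau.
\end{equation*}
Since $|e^{-i\tau\cdot u/\sqrt T}|=1$, the difference $|\langle m_T^{u,f,\mu},h\rangle-\langle\tilde m_T^{u,f},h\rangle|$ is bounded by the $L^1(\tau)$-norm of the difference of the two remaining integrands, an upper bound independent of $u$. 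It therefore suffices to show that this $L^1$-difference tends to zero uniformly in $f\in\mathcal B_C$ and $\mu\in\mathcal B'_p$.

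Fix $\delta>0$ with $B(0,\delta)\subset I$, set $A_T:=\{\tau:|\tau|<\delta\sqrt T\}$, and split both integrals using $A_T$. On the complement $\{\delta\sqrt T\leq|\tau|\leq R\sqrt T\}$, the compact set $K:=\{\delta\leq|t|\leq R\}$ avoids the origin, so \lemref{lem:exponential_convergence_away_from_0} together with \lemref{lem:Nagaev} yields $|\E_\mu[f(X_T)e^{i\tau\cdot S_T/\sqrt T}]|\leq C r_K^{\lt}$ uniformly there. Since the volume of that region is $O(T^{d/2})$, its contribution to the formula for $\langle m_T^{u,f,\mu},h\rangle$ is $O(T^{d/2}r_K^{\lt})\to 0$ uniformly in $u,f,\mu$. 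The corresponding tail of the formula for $\langle\tilde m_T^{u,f},h\rangle$ is bounded by $\|\hat h\|_\infty C\int_{|\tau|\geq\delta\sqrt T}e^{-\tau^*\Sigma\Sigma^*\tau/2}\,d\tau$, which is exponentially small in $T$.

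On $A_T$, \propref{prop:multi-operator_product} applied with $i=0$ to the vector $\tilde Q(\tau/\sqrt T,T)f$ (whose norm is at most $C$) writes $\E_\mu[f(X_T)e^{i\tau\cdot S_T/\sqrt T}]$ as a principal term
\begin{align*}
    P_T(\tau):=\ &\prod_{k=0}^{\lt-1}\lambda(\tau/\sqrt T,k/T,(k+1)/T)\prod_{k=0}^{\lt-2}\langle\varphi(\tau/\sqrt T,k/T,(k+1)/T),v(\tau/\sqrt T,(k+1)/T,(k+2)/T)\rangle \\
    &\cdot\langle\varphi^{(T)},\tilde Q(\tau/\sqrt T,T)f\rangle\langle\mu,v^{(T)}\rangle,
\end{align*}
with $\varphi^{(T)}:=\varphi(\tau/\sqrt T,(\lt-1)/T,\lt/T)$ and $v^{(T)}:=v(\tau/\sqrt T,0,1/T)$, plus a residual $E_T(\tau)$ bounded by $2CMT^{-1}|\prod_k\lambda(\tau/\sqrt T,k/T,(k+1)/T)|$. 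The Gaussian bound \eqref{eq:dct} then yields $\int_{A_T}|\hat h(\tau/\sqrt T)|\,|E_T(\tau)|\,d\tau=O(T^{-1})\to 0$ uniformly in $\mu,f$. For $P_T$ itself: \lemref{lem:product_of_eigenvalues} gives $\prod_k\lambda\to e^{-\tau^*\Sigma\Sigma^*\tau/2}$; \eqref{eq:product_of_varphi_v_2} gives $\prod_k\langle\varphi,v\rangle\to 1$; continuity of $v$ at $t=0$ yields $\|v^{(T)}-1\|\to 0$ and thus $\langle\mu,v^{(T)}\rangle\to 1$ uniformly in $\mu$; continuity of $\varphi$ together with $|e^{i(\tau/\sqrt T)\cdot\int_0^{T-\lt}b\,ds}-1|\leq\|b\|_\infty|\tau|/\sqrt T$ and invariance of $\nu$ under the Markov semigroup give $\langle\varphi^{(T)},\tilde Q(\tau/\sqrt T,T)f\rangle\to\langle\nu,f\rangle$ uniformly in $f\in\mathcal B_C$. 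Hence $\hat h(\tau/\sqrt T)P_T(\tau)\to\hat h(0)\langle\nu,f\rangle e^{-\tau^*\Sigma\Sigma^*\tau/2}$ pointwise in $\tau$, uniformly in $\mu,f$, and \eqref{eq:dct} supplies a Gaussian majorant uniform in $T$. Dominated convergence then identifies the limit of the $A_T$-integral in the formula for $\langle m_T^{u,f,\mu},h\rangle$ with the corresponding $A_T$-integral for $\langle\tilde m_T^{u,f},h\rangle$, and together with the tail estimates this proves the claim.

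The main obstacle is propagating the uniformity in the three parameters $u,f,\mu$ through all estimates simultaneously. The $u$-dependence separates cleanly via the modulus-one phase factor, reducing the problem to an $L^1$-estimate in $\tau$. Uniformity in $\mu\in\mathcal B'_p$ and $f\in\mathcal B_C$ is more delicate: each factor in $P_T(\tau)$ and the bound on $E_T(\tau)$ must depend on $\mu$ and $f$ only through norms majorated by $\|\mu\|=1$ and $\|f\|\leq C$. This is precisely what \thmref{thm:multi-perturbation} (via joint $\mathcal C^2$-regularity on a common neighborhood $I$) and \propref{prop:multi-operator_product} (via the error bound proportional to $\|f\|/T$) are designed to provide.
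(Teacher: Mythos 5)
Your proposal is correct and follows essentially the same route as the paper's proof: both pass to the Fourier side via Lemma~\ref{lem:Nagaev}, isolate a near-origin region where Proposition~\ref{prop:multi-operator_product} supplies the principal/error decomposition (with the Gaussian majorant \eqref{eq:dct_lambda}--\eqref{eq:dct} and dominated convergence handling the limit), and a far-from-origin region where Lemma~\ref{lem:exponential_convergence_away_from_0} gives exponential decay beating the $T^{d/2}$ prefactor. The only cosmetic differences are that you substitute $t=\tau/\sqrt T$ before splitting the integral and split at a ball $B(0,\delta)\subset I$ rather than exactly at $I$, and you cite \eqref{eq:dct} for the error term where \eqref{eq:dct_lambda} is what is actually used -- neither of which affects the argument.
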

\noindent\textit{Proof.}
    Recall the expression in formula \eqref{eq:nagaev}.
    By Proposition~\ref{prop:multi-operator_product}, there exists $M>0$ such that, for all $t\in I$ and all $T$ sufficiently large,
        \begin{align*}
            &\prod_{k=0}^{\lt-1}Q\left(t,\frac{k}{T},\frac{k+1}{T}\right)\Tilde Q(t,T)f\\
            &=\prod_{k=0}^{\lt-1}\lambda\left(t,\frac{k}{T},\frac{k+1}{T}\right)\prod_{k=0}^{\lt-2}\left\langle\mu\left(t,\frac{k}{T},\frac{k+1}{T}\right),v\left(t,\frac{k+1}{T},\frac{k+2}{T}\right)\right\rangle\\
            &\quad\quad\cdot\left\langle \varphi\left(t,\frac{\lt-1}{T},\frac{\lt}{T}\right),\Tilde Q(t,T)f\right\rangle\cdot v\left(t,0,\frac{1}{T}\right)\\
            &\quad+\prod_{k=0}^{\lt-1}\lambda\left(t,\frac{k}{T},\frac{k+1}{T}\right)\|{\tilde Q(t,T)}f\|p_{0,T}^t \cdot h\left(t,0,\frac{1}{T}\right)\\
            &\quad+\prod_{k=0}^{\lt-1}\lambda\left(t,\frac{k}{T},\frac{k+1}{T}\right)\|{\tilde Q(t,T)}f\|q_{0,T}^t \cdot v\left(t,0,\frac{1}{T}\right),
        \end{align*}where $|p_{0,T}^t|,|q_{0,T}^t|<M/T$.
    By the formula of inverse Fourier transformation, with $\hat h$ denoting the Fourier transform of $h$,
    \begin{align*}
        \langle m_T^{u,f,\mu},h\rangle&={\mathrm{det}(\Sigma)}(2\pi T)^{d/2}\E_\mu[f(X_{T})h(S_T-u)]\\
        &={\mathrm{det}(\Sigma)}\left({\frac{T}{2\pi}}\right)^{d/2}\E_\mu\left[f(X_{T})\int_{\mathbb R^d}e^{it\cdot (S_T-u)}\hat h(t)dt\right]\\
        &={\mathrm{det}(\Sigma)}\left({\frac{T}{2\pi}}\right)^{d/2}\int_{\mathbb R^d}\hat h(t)e^{-it\cdot u}\left\langle\mu, \prod_{k=0}^{\lt-1}Q\left(t,\frac{k}{T},\frac{k+1}{T}\right)\Tilde Q(t,T)f\right\rangle dt\\
        &=:\mathcal A_1+\mathcal A_2+\mathcal A_3,
    \end{align*}
    where 
    \begin{align*}
        \mathcal A_1={\mathrm{det}(\Sigma)}\left({\frac{T}{2\pi}}\right)^{d/2}&\int_{I}\hat h(t)e^{-it\cdot u}\prod_{k=0}^{\lt-1}\lambda\left(t,\frac{k}{T},\frac{k+1}{T}\right)\prod_{k=0}^{\lt-2}\left\langle\varphi\left(t,\frac{k}{T},\frac{k+1}{T}\right),v\left(t,\frac{k+1}{T},\frac{k+2}{T}\right)\right\rangle\\
        &\quad \cdot\left\langle\varphi\left(t,\frac{\lt-1}{T},\frac{\lt}{T}\right),\Tilde Q(t,T)f\right\rangle \left\langle\mu,v\left(t,0,\frac{1}{T}\right)\right\rangle dt,\\
        \mathcal A_2={\mathrm{det}(\Sigma)}\left({\frac{T}{2\pi}}\right)^{d/2}&\int_{I}\hat h(t)e^{-it\cdot u}\prod_{k=0}^{\lt-1}\lambda\left(t,\frac{k}{T},\frac{k+1}{T}\right)\|{\tilde Q(t,T)}f\|\\
        &\quad\cdot \left(p_{0,T}^t \left\langle\mu,v\left(t,0,\frac{1}{T}\right)\right\rangle+q_{0,T}^t \left\langle\mu,h\left(t,0,\frac{1}{T}\right)\right\rangle\right) dt,\\
        \mathcal A_3={\mathrm{det}(\Sigma)}\left({\frac{T}{2\pi}}\right)^{d/2}&\int_{I^c\cap K}\hat h(t)e^{-it\cdot u}\langle\mu, \prod_{k=0}^{\lt-1}Q\left(t,\frac{k}{T},\frac{k+1}{T}\right)\Tilde Q(t,T)f\rangle dt,
    \end{align*}
    and $K$ is the support of $\hat h$. By the change of variable with $t=\frac{\tau}{\sqrt{T}}$,
    \begin{align*}
        \mathcal A_1&=\int_{\mathbb R^d}\kappa_T^{\mu}(\tau)e^{-i\frac{\tau\cdot u}{\sqrt{T}}}\left\langle 1_I(\frac{\tau}{\sqrt{T}})\varphi\left(\frac{\tau}{\sqrt{T}},\frac{\lt-1}{T},\frac{\lt}{T}\right),\tilde Q\left(\frac{\tau}{\sqrt{T}},T\right)f\right\rangle d\tau,
    \end{align*}
    where
    \begin{align*}
        \kappa_T^{\mu}(\tau)&={\mathrm{det}(\Sigma)}\left({\frac{1}{2\pi}}\right)^{d/2}\cdot 1_I\left(\frac{\tau}{\sqrt{T}}\right)\hat h\left(\frac{\tau}{\sqrt{T}}\right)\prod_{k=0}^{\lt-1}\lambda\left(\frac{\tau}{\sqrt{T}},\frac{k}{T},\frac{k+1}{T}\right)\\
        &\quad\quad\cdot\prod_{k=0}^{\lt-2}\left\langle \varphi\left(\frac{\tau}{\sqrt{T}},\frac{k}{T},\frac{k+1}{T}\right),v\left(\frac{\tau}{\sqrt{T}},\frac{k+1}{T},\frac{k+2}{T}\right)\right\rangle\cdot\left\langle\mu, v\left(\frac{\tau}{\sqrt{T}},0,\frac{1}{T}\right)\right\rangle.
    \end{align*}
Define
\[
\kappa(\tau)={\mathrm{det}(\Sigma)}\left(\frac{1}{{2\pi}}\right)^{d/2}\hat h(0)\exp(-\tau^*\Sigma\Sigma^*\tau/2).
\]
By Lemma~\ref{lem:product_of_eigenvalues} and \eqref{eq:product_of_varphi_v_2}, $\kappa_T^{\mu}(\tau) \to\kappa(\tau)$ as $T\to\infty$, for each $\tau$, uniformly in $\mu\in\mathcal B'_p$. By \eqref{eq:dct}, $|\kappa_T^{\mu}(\tau)|\leq C_3\sup_{t\in I}|\hat h(t)|\exp(-\frac{c|\tau|^2}{4})$ with a certain constant $C_3>0$. Hence, by Lebesgue's dominated convergence theorem, uniformly in $\mu\in\mathcal B_p'$,
\begin{equation}
\label{eq:kappa_converges}
    \int_{\mathbb R^d}|\kappa_T^{\mu}(\tau)-\kappa(\tau)|d\tau\to0.
\end{equation}
    Note that $\tilde Q^*(\frac{\tau}{\sqrt{T}},T)\varphi(\frac{\tau}{\sqrt{T}},\frac{\lt-1}{T},\frac{\lt}{T})\to\nu$ in $\mathcal B'$ as $T\to\infty$.
    Again, by Lebesgue's dominated convergence theorem, uniformly $\mu\in\mathcal B'_p$,
    \begin{equation}
    \label{eq:tildeQ_converges}
        \int_{\mathbb R^d}\kappa_T^\mu(\tau)\left\|1_I\left(\frac{\tau}{\sqrt{T}}\right)\tilde Q^*\left(\frac{\tau}{\sqrt{T}},T\right)\varphi\left(\frac{\tau}{\sqrt{T}},\frac{\lt-1}{T},\frac{\lt}{T}\right)-\nu\right\|d\tau\to0.
    \end{equation} 
    From the definition of $\Tilde m_T^{u,f}$ in \eqref{def:measures} and the definition of $\kappa(\tau)$, it follows that
    \begin{equation*}
        \langle\Tilde m_T^{u,f},h\rangle=\int_{\mathbb R^d}\kappa(\tau)e^{-i\frac{\tau\cdot u}{\sqrt{T}}}\langle\nu,f\rangle d\tau.
    \end{equation*}
    Therefore, for all $f\in \mathcal B_C$,
    \begin{align*}
        &|\mathcal A_1-\langle\Tilde m_T^{u,f},h\rangle|\\
        &\leq\left|\int_{\mathbb R^d}(\kappa_T^{\mu}(\tau)-\kappa(\tau))e^{-i\frac{\tau\cdot u}{\sqrt{T}}}\langle \nu,f\rangle d\tau\right|\\
        &\quad+\left|\int_{\mathbb R^d}\kappa_T^{\mu}(\tau) e^{-i\frac{\tau\cdot u}{\sqrt{n}}}\left(1_I\left(\frac{\tau}{\sqrt{T}}\right)\left\langle \varphi\left(\frac{\tau}{\sqrt{T}},\frac{\lt-1}{T},\frac{\lt}{T}\right),\tilde Q\left(\frac{\tau}{\sqrt{T}},T\right)f\right\rangle-\left\langle\nu,f\right\rangle\right) d\tau\right|\\
        &\leq C\int_{\mathbb R^d}|\kappa_T^{\mu}(\tau)-\kappa(\tau)|d\tau\\
        &\quad+C\left|\int_{\mathbb R^d}\kappa_T^{\mu}(\tau)\left\|1_I\left(\frac{\tau}{\sqrt{T}}\right)\tilde Q^*\left(\frac{\tau}{\sqrt{T}},T\right)\varphi\left(\frac{\tau}{\sqrt{T}},\frac{\lt-1}{T},\frac{\lt}{T}\right)-\nu\right\|d\tau\right|.
    \end{align*}So, by \eqref{eq:kappa_converges} and \eqref{eq:tildeQ_converges}, $|\mathcal A_1-\langle\Tilde m_T^{u,f},h\rangle|\to0$, as $T\to\infty$, uniformly in $u\in\mathbb R^d$,  $f\in\mathcal B_C$, and $\mu\in\mathcal B'_p$. Again, by Proposition~\ref{prop:multi-operator_product},
    \begin{align*}
        |\mathcal A_2|&\leq{\mathrm{det}(\Sigma)}\left({\frac{T}{2\pi}}\right)^{d/2}\int_I\frac{2M}{T}\left|\hat h(t)\right|\cdot\left|\prod_{k=0}^{\lt-1}\lambda\left(t,\frac{k}{T},\frac{k+1}{T}\right)\right|dt\cdot\sup_{t\in I}\| {\tilde Q(t,T)}\|\|\mu\|\|f\|\\
        &\leq C\int_{\mathbb R^d}\tilde\kappa_T(\tau)d\tau,
    \end{align*}
    where \[\tilde\kappa_T(\tau)=\frac{2M}{T}{\mathrm{det}(\Sigma)}\left({\frac{1}{2\pi}}\right)^{d/2}\cdot 1_I\left(\frac{\tau}{\sqrt{T}}\right)\left|\hat h\left(\frac{\tau}{\sqrt{T}}\right)\right|\cdot\left|\prod_{k=0}^{\lt-1}\lambda\left(\frac{\tau}{\sqrt{T}},\frac{k}{T},\frac{k+1}{T}\right)\right|,\] since $\|\mu\|,\| {\tilde Q(t,T)}\|\leq1$. 
    By \eqref{eq:dct_lambda} and Lebesgue's dominated convergence theorem, $\int_{\mathbb R^d}\tilde\kappa_T(\tau)d\tau\to0$.
    Finally, by Lemma~\ref{lem:exponential_convergence_away_from_0}, uniformly in $u\in\mathbb R^d$,  $f\in\mathcal B_C$, and $\mu\in\mathcal B'_p$, as $T\to\infty$,
    \begin{align*}
        |\mathcal A_3|&\leq{\mathrm{det}(\Sigma)}\left({\frac{T}{2\pi}}\right)^{d/2}\int_{I^c\cap K}|\hat h(t)|\cdot\left|\left\langle\mu, \prod_{k=0}^{\lt-1}Q\left(t,\frac{k}{T},\frac{k+1}{T}\right)\Tilde Q(t,T)f\right\rangle\right|dt\\
        &\leq{\mathrm{det}(\Sigma)}\cdot r_K^{\lt}\left({\frac{T}{2\pi}}\right)^{d/2}\int_{I^c\cap K}|\hat h(t)|dt\cdot \|f\|\\
        &\leq C{\mathrm{det}(\Sigma)}\cdot r_K^{\lt}\left({\frac{T}{2\pi}}\right)^{d/2}\int_{I^c\cap K}|\hat h(t)|dt\to0.\tag*{\qed}
    \end{align*}

The local limit theorem as in Theorem~\ref{thm:main_result} with fixed $f$, $\mu$, and $u$ follows from Lemma~\ref{lem:main_proof} by Theorem 10.7 in \cite{Breiman}. 
The following lemma enables us to state uniform results in Theorem~\ref{thm:main_result}. It is a multi-dimensional version of the Lemma IV.5 in \cite{HennionHerve}.
\begin{lemma}
\label{lem:multi-compact}
    Let $\chi$ be a parameter ranging over a set $\mathcal X$ and, for $T\geq 1$, let $m_T^\chi,\Tilde m_T^\chi$ be positive measures on $\mathbb R^d$ with respect to which $\rho(x)=1\wedge 1/|x|^2$ is integrable. Suppose that $\sup_{T}\sup_{\chi\in\mathcal X}\Tilde m_T^\chi(\rho(x))$ is finite, and, for each continuous integrable  function $h$ on $\mathbb R^d$ whose Fourier transform has compact support, \begin{equation*}
    \label{eq:multi_compact_convergence}
        \lim_{T\to\infty}\sup_{\chi\in\mathcal X}|\langle m_T^{\chi},h\rangle-\langle\Tilde m_T^{\chi},h\rangle|=0.
    \end{equation*} Then, for each compactly supported real-valued continuous function $g$, we have
    \begin{equation*}
        \lim_{T\to\infty}\sup_{\chi\in\mathcal X}|\langle m_T^{\chi},g\rangle-\langle\Tilde m_T^{\chi},g\rangle|=0.
    \end{equation*}
\end{lemma}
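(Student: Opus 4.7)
The plan is to approximate the compactly supported continuous function $g$ by functions $h_\varepsilon$ lying in the class $\mathcal H$ of continuous integrable functions on $\mathbb R^d$ with compactly supported Fourier transform (to which the hypothesis applies), via a Fejér-type mollification, and to bound the approximation error uniformly in $T$ and $\chi$ using the weight $\rho$.

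Concretely, I would fix a nonnegative kernel $k \in \mathcal H$ with $\int k = 1$ and pointwise decay $k(x) \leq C\rho(x)$; for example, one can take $k = c |\hat\eta|^{2}$ for a suitable compactly supported $C^{1}$ function $\eta$, which gives $k\geq 0$, $\hat k$ of compact support, and $k(x)=O(|x|^{-2})$. Setting $k_\varepsilon(x) = \varepsilon^{-d} k(x/\varepsilon)$ produces an approximate identity whose Fourier transform $\hat k_\varepsilon = \hat k(\varepsilon\,\cdot)$ remains compactly supported, and $h_\varepsilon := g * k_\varepsilon$ then lies in $\mathcal H$. By uniform continuity of $g$, $h_\varepsilon \to g$ uniformly on $\mathbb R^d$; and if $R$ is chosen so that $\mathrm{supp}(g) \subset B_{R/2}$, then for $x \in B_R^c$ the integral defining $h_\varepsilon(x)$ is supported where $|x-y|\geq |x|/2$, so the decay of $k$ yields a tail bound $|h_\varepsilon(x)| \leq C_R \varepsilon^{\alpha} \rho(x)$ for some $\alpha > 0$.

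Splitting
\[
\langle m_T^\chi - \tilde m_T^\chi, g\rangle = \langle m_T^\chi, g - h_\varepsilon\rangle + \langle m_T^\chi - \tilde m_T^\chi, h_\varepsilon\rangle + \langle \tilde m_T^\chi, h_\varepsilon - g\rangle,
\]
the middle term tends to zero uniformly in $\chi$ for each fixed $\varepsilon$ by the hypothesis. The outer terms are controlled by splitting into $B_R$ and its complement: on $B_R$ one obtains $\|g - h_\varepsilon\|_\infty \cdot \mu(B_R) \leq (1+R^2)\|g-h_\varepsilon\|_\infty\cdot\mu(\rho)$, using $\rho \geq (1+R^2)^{-1}$ on $B_R$; on $B_R^c$ one obtains $C_R \varepsilon^{\alpha}\mu(\rho)$; here $\mu$ is either $m_T^\chi$ or $\tilde m_T^\chi$. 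Since $\tilde m_T^\chi(\rho)$ is uniformly bounded by hypothesis, and given an a priori uniform bound on $m_T^\chi(\rho)$ (at least for $T$ sufficiently large, which is all that the asymptotic statement needs), the outer terms become $o_\varepsilon(1)$ uniformly in $T$ and $\chi$. Choosing $\varepsilon$ small, and then sending $T \to \infty$ with this $\varepsilon$ fixed, yields the desired uniform convergence.

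The main obstacle is establishing the uniform bound $\sup_{T,\chi} m_T^\chi(\rho) < \infty$ for large $T$. The strategy is to find a single $h_{*} \in \mathcal H$ satisfying a two-sided comparison $c_* \rho \leq h_{*} \leq C_* \rho$ pointwise; then the hypothesis applied to $h_{*}$ gives $m_T^\chi(h_{*}) \leq \tilde m_T^\chi(h_{*}) + o_T(1) \leq C_* \tilde m_T^\chi(\rho) + o_T(1)$ uniformly in $\chi$, and hence $m_T^\chi(\rho) \leq c_*^{-1} m_T^\chi(h_{*})$ is uniformly bounded for large $T$. Constructing such an $h_{*}$ in dimension $d$ is the delicate step: a tensor product of one-dimensional Fejér kernels (together with a few translates to remove zeros) lies in $\mathcal H$ and is bounded above by a constant multiple of $\rho$, but its pointwise lower bound along coordinate axes decays like $\prod_i(1\wedge x_i^{-2})$, which does not dominate $1\wedge |x|^{-2}$ along oblique directions. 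A radially symmetric construction based on a suitable positive-definite compactly supported profile for $\hat h_{*}$, possibly combined with finitely many translates to eliminate residual zeros, should produce an $h_{*}$ with the required two-sided comparison to $\rho$; once this is in hand, the splitting argument above concludes the proof of the lemma.
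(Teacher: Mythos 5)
There is a genuine gap, and it is in the step you yourself flag as delicate: the uniform bound $\sup_{T,\chi} m_T^\chi(\rho) < \infty$ cannot be established via an $h_* \in \mathcal H$ with $c_*\rho \leq h_* \leq C_*\rho$, because for $d\geq 2$ no such $h_*$ exists. The weight $\rho(x)=1\wedge|x|^{-2}$ is not Lebesgue-integrable on $\mathbb R^d$ when $d\geq 2$ (indeed $\int_{|x|>1}|x|^{-2}\,dx = c_d\int_1^\infty r^{d-3}\,dr = \infty$), so any function dominating $c_*\rho$ pointwise is itself non-integrable and hence cannot belong to $\mathcal H$. The obstruction you describe about tensor products failing along oblique directions is real but secondary; the lower bound $h_*\geq c_*\rho$ is impossible regardless of how cleverly $h_*$ is built. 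So the symmetric three-way split, which forces you to control $m_T^\chi(\rho)$, cannot be completed.

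The way around this is to avoid $m_T^\chi(\rho)$ entirely by using a two-sided sandwich and the positivity of the measures one side at a time. Construct $h_1,h_2\in\mathcal H$ with $h_1\leq g\leq h_2$ pointwise and $0\leq h_2-h_1\leq\varepsilon'\rho$. Positivity of $m_T^\chi$ and $\tilde m_T^\chi$ then gives
\begin{equation*}
m_T^\chi(g)-\tilde m_T^\chi(g)\ \leq\ m_T^\chi(h_2)-\tilde m_T^\chi(h_1)\ =\ \bigl[m_T^\chi(h_2)-\tilde m_T^\chi(h_2)\bigr]+\tilde m_T^\chi(h_2-h_1),
\end{equation*}
with the matching lower bound obtained by swapping $h_1$ and $h_2$. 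The bracket is $o(1)$ uniformly in $\chi$ by hypothesis, and the last term is at most $\varepsilon'\sup_{T,\chi}\tilde m_T^\chi(\rho)$, which is what the hypothesis actually controls; $m_T^\chi(\rho)$ never appears. The sandwiching functions can be built along the lines you already have: take the upper/lower $\delta$-envelopes $g_\delta^{\pm}$ of $g$ (continuous, compactly supported, $g_\delta^-\leq g\leq g_\delta^+$, $\|g_\delta^+-g_\delta^-\|_\infty\to0$) and convolve with a nonnegative approximate identity $k_\varepsilon$ from $\mathcal H$, normalizing so that the inequalities $h_1\leq g\leq h_2$ persist.

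A secondary correction: the tail bound $|h_\varepsilon(x)|\leq C_R\varepsilon^\alpha\rho(x)$ with $\alpha>0$ fails for $d\geq2$ if $k$ decays only like $|x|^{-2}$, since scaling gives $k_\varepsilon(y)\leq C\varepsilon^{2-d}|y|^{-2}$ and the prefactor $\varepsilon^{2-d}$ does not tend to zero for $d\geq2$. You need $k(x)=O(|x|^{-2N})$ with $N>d/2$, which is available by taking $\eta$ of class $C^N$ with compact support in the construction $k=c|\hat\eta|^2$; this correction also enters the tail estimate for $h_2-h_1$ in the sandwiching argument above.
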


\begin{proof}[Proof of Theorem~\ref{thm:main_result}]
    Since, for each real-valued $f\in\mathcal B_C$, $f=f_+-f_-$, where $f_+=f\vee0$ and $f_-=-(f\wedge0)$, it suffices to consider all non-negative $f\in\mathcal B_C$.
    Lemma~\ref{lem:main_proof} verifies the assumptions in Lemma~\ref{lem:multi-compact} with the measures defined as in \eqref{def:measures} and the parameter set $\mathcal X$ being all $u\in\mathbb R^d$, non-negative $f\in\mathcal B_C$, and initial distribution $\mu\in\mathcal B'_p$. 
    Therefore, the uniform convergence follows from the two preceding lemmas.
\end{proof}

\begin{appendices}
\renewcommand{\thesection}{Appendix \Alph{section}}
\label{sec:appendix}
\section{Non-arithmetic condition and non-lattice condition}
It is clear that, under Assumptions \hyperlink{as:markov_process}{(A1)-(A3)}, \hyperlink{as:non-arithmetic}{(A4)} implies \hyperlink{as:non-lattice}{(NL)}. Here, we will show that they are actually equivalent under Assumptions \hyperlink{as:markov_process}{(A1)-(A3)}.
The proof is based on the results in \cite{MR2322704} stated below. To apply the results, we consider a discrete Markov chain defined using $X_t$. 
Namely, we consider a Markov chain $\mathcal X_k$, $k\geq 1$, defined as the path of the Markov process $X_t$ on the interval $[k-1,k]$, on the metric space $(\mathfrak X,\mathscr B(\mathfrak X))$, where $\mathfrak X$ denotes $\mathcal D([0,1])$. 
Let $\mathfrak P$ denote the Markov kernel of $\mathcal X_k$, $\mathfrak{B}$ be the Banach space of bounded $\mathscr B(\mathfrak X)$-measurable complex functions on $\mathfrak X$ equipped with the supremum norm: $\|F\|=\sup_{\mathfrak x\in\mathfrak X}|F(\mathfrak x)|$. 
To introduce a counterpart of $Q(t,\alpha,\alpha)$, we define $I_\alpha\in\mathfrak B$, by $I_\alpha(\mathfrak x)=\int_0^1 b(\alpha,\mathfrak x_s)ds$, and the operator $\mathfrak Q(t,\alpha):\mathfrak B\to\mathfrak B$ by
    \begin{equation*}
        \mathfrak Q(t,\alpha)F(\mathfrak x)=\int_{\mathfrak X}\exp(it\cdot I_\alpha(\mathfrak y))F(\mathfrak y)\mathfrak P(\mathfrak x,d\mathfrak y).
    \end{equation*}
    
(i) We first prove that, for each $0\leq\alpha\leq1$ and $t \not=0$, $r(\mathfrak Q(t,\alpha))<1$.
The following lemma is a reformulation of the results in Theorem~3.1, Corollary~3.1, and Lemma~3.3 in \cite{MR2322704}, adapted to our situation:
\begin{namedtheorem}[Lemma~A.1]
    If $\mathfrak B(\mathfrak X)$ is countably generated and the Markov kernel $\mathfrak P$ satisfies the Doeblin condition, 
    then either $r(\mathfrak Q(t,\alpha))=r(\mathfrak P)=1$ and $\mathfrak Q(t,\alpha)$ is quasi-compact, or $r(\mathfrak Q(t,\alpha))<r(\mathfrak P)=1$.
\end{namedtheorem}
It is not hard to verify $\mathfrak B(\mathfrak X)$ is countably generated since $\mathfrak X=\mathcal D([0,1])$, and the Doeblin condition is satisfied for the operator $\mathfrak P$ given Assumptions~\hyperlink{as:markov_process}{(A1)-(A2)}. So, in order to prove (i), it remains to show that the first situation in the statement, i.e., where $r(\mathfrak Q(t,\alpha))=r(\mathfrak P)=1$ and $\mathfrak Q(t,\alpha)$ is quasi-compact, is not possible. 

Suppose that $r(\mathfrak Q(t,\alpha))=1$ and $\mathfrak Q(t,\alpha)$ is quasi-compact. Then there exists $\hat F\in\mathfrak B$ with $\|\hat F\|=1$ and an eigenvalue $\hat\lambda$ with $|\hat\lambda|=1$ such that $\mathfrak Q(t,\alpha)\hat F=\hat\lambda\hat F$. Namely, for each $\mathfrak x\in\mathcal D([0,1])$,
\begin{align*}
    \hat\lambda\hat F(\mathfrak x)=\mathfrak Q(t,\alpha)\hat F&=\int_{\mathfrak X}\exp(it\cdot I_\alpha(\mathfrak y))F(\mathfrak y)\mathfrak P(\mathfrak x,d\mathfrak y)\\
    &=\E_{\mathfrak x_1}\left[\exp\left(it\cdot\int_0^1b(\alpha,X_s)ds\right)\hat F(X_{\cdot})\right]
\end{align*}by the Markov property of the process $\{X_s\}_{s\geq0}$, where $X_{\cdot}$ stands for the path of $X_s$ on $[0,1]$. Note that the right-hand side is a function of $\mathfrak x_1$. 
Hence, there exists $\hat f\in\mathcal B$ with $\|\hat f\|=1$ such that $\hat f(\mathfrak x_1)=\hat F(\mathfrak x)$, and, for each $x\in \bm X$, 
\begin{equation}
\label{eqa:eigen}
    \hat\lambda\hat f(x)=\E_{x}\left[\exp\left(it\cdot\int_0^1b(\alpha,X_s)ds\right)\hat f(X_1)\right].\tag{A.1}
\end{equation}

For each $\e>0$ small, we can find $x\in\bm X$ such that $|\hat f(x)|>1-\e$ and,  by Assumption~\hyperlink{as:mixing}{(A2)}, we can find $N>0$ such that $\mathrm{TV}(P(N,x,\cdot),\nu)<\e$.
By applying \eqref{eqa:eigen} $N$ times, we obtain:
\[
\hat\lambda^N\hat f(x)=\E_{x}\left[\exp\left(it\cdot\int_0^Nb(\alpha,X_s)ds\right)\hat f(X_N)\right].
\]
It follows, from how we chose $x$ and $N$, that
\begin{align*}
    1-\e\leq|\hat\lambda^N\hat f(x)|\leq \E_{x}|\hat f(X_N)|\leq\langle\nu,|\hat f|\rangle+2\e,
\end{align*}which implies that $\langle\nu,|\hat f|\rangle>1-3\e$. Since this holds for arbitrary $\e>0$ small, we conclude that $\langle\nu,|\hat f|\rangle=1$, i.e., $|\hat f|=1$, $\nu$-a.s.

Let us come back to \eqref{eqa:eigen}. Since $|\hat f|=1$, $\nu$-a.s., we can write $\hat f=\exp(iu)$, $\nu$-a.s., where $u$ is a real-valued bounded measurable function. Then, \eqref{eqa:eigen} implies that
\[
\left|\E_{\nu}\left[\exp\left(i\left(t\cdot\int_0^1b(\alpha,X_s)ds+u(X_1)-u(X_0)\right)\right)\right]\right|=1,
\]
which contradicts with the non-lattice condition \hyperlink{as:non-lattice}{(NL)}.

(ii) We now prove that, for each $0\leq\alpha\leq1$ and $t \not=0$, $r(Q(t,\alpha,\alpha))<1$. There exists $C>0$ such that, for each $f\in\mathcal B$ with $\|f\|=1$ and $n>0$,
\[
\|Q(t,\alpha,\alpha)^nf\|=\|\mathfrak Q(t,\alpha)^n F\|<Cr(\mathfrak Q(t,\alpha))^n,
\]where $F$ is defined by $F(\mathfrak x)=f(\mathfrak x_1)$. Therefore, $r(Q(t,\alpha,\alpha))\leq r(\mathfrak Q(t,\alpha))<1$, and we conclude that \hyperlink{as:non-lattice}{(NL)}, together with \hyperlink{as:markov_process}{(A1)-(A3)}, implies \hyperlink{as:non-arithmetic}{(A4)}.

\end{appendices}
\section*{Acknowledgements}
We are grateful to Dmitry Dolgopyat and Yeor Hafouta for helpful discussions. The authors were supported by the NSF grant DMS-2307377. Leonid Koralov was supported by the Simons Foundation, Gift ID: MP-TSM-00002743.

\printbibliography
\end{document}